\newcommand{\cal}{\mathcal}
\newtheorem{theorem}{Theorem}[section]
\newtheorem{lem}[theorem]{Lemma}
\newtheorem{propn}[theorem]{Proposition}
\newtheorem{cor}[theorem]{Corollary}
\newcommand{\tF}{\widetilde{F}}
\newcommand{\ntF}{F_\tau}
\newcommand{\rB}[2]{{\cal B}^{(#2)}_{#1}}
\newcommand{\rBo}[1]{{\cal B}^{(#1)}}
\newcommand{\subdiff}{\partial}
\newcommand{\bef}{\Rightarrow}
\newcommand{\comms}{\rightarrow}
\newcommand{\PF}{PF$^+$}
\newcommand{\acci}{\ast\hspace*{-4.3pt}\circ}
\newcommand{\nat}[1]{#1^{\natural}}
\newcommand{\natt}[1]{\hat{#1}}
\newcommand{\cl}[1]{\underline{#1}}
\newcommand{\cF}[1]{{\mathcal F^{(#1)}}}
\newcommand{\cG}[1]{{\mathcal G^{(#1)}}}
\newcommand{\btheta}{\bolds{\theta}}
\newcommand{\cS}{\cal S}
\newcommand{\mbR}{\mathbb{R}}
\newcommand{\tP}{\mathbb{P}}
\newcommand{\tE}{\mathbb{E}}
\newcommand{\speedps}{\Gamma}
\newcommand{\varthetapp}[1]{\vartheta(#1)}
\newcommand{\varthetapps}[1]{\vartheta}
\newcommand{\PhiD}{{\cal D}}
\newcommand{\Phidp}[1]{\PhiD^{+}(#1)}
\newcommand{\psio}{\overline{\psi}}
\newcommand{\psiu}{\underline{\psi}}
\newcommand{\conC}[1]{{\cal C}_{#1}}
\newcommand{\first}{\conC{1}}
\newcommand{\pen}{\conC{K-1}}
\newcommand{\last}{\conC{K}}
\newcommand{\Up}{U}
\newcommand{\Surv}{\mathscr{S}}
\begin{document}
\begin{frontmatter}

\title{Spreading speeds in reducible multitype branching random walk}
\runtitle{Speed in multitype BRW}

\begin{aug}
\author[A]{\fnms{J. D.} \snm{Biggins}\corref{}\ead[label=e1]{J.Biggins@sheffield.ac.uk}}
\runauthor{J. D. Biggins}
\affiliation{University of Sheffield}
\address[A]{School of Mathematics and Statistics\\
University of Sheffield\\
Hicks Building\\
Sheffield, S3 7RH \\
United Kingdom\\
\printead{e1}} 
\end{aug}

\received{\smonth{9} \syear{2008}}
\revised{\smonth{9} \syear{2011}}

%
\begin{abstract}
This paper gives conditions for the rightmost particle in the $n$th
generation of a multitype branching random walk to have a speed, in
the sense that its location divided by $n$ converges to a constant as
$n$ goes to infinity. Furthermore, a formula for the speed is obtained
in terms of the reproduction laws. The case where the collection of
types is irreducible was treated long ago. In addition, the asymptotic
behavior of the number in the $n$th generation to the right of $na$ is
obtained. The initial motive for considering the reducible case was
results for a deterministic spatial population model with several types
of individual discussed by Weinberger, Lewis and Li [\textit{J. Math.
Biol.} \textbf{55} (2007) 207--222]: the speed identified here for the
branching random walk corresponds to an upper bound for the speed
identified there for the deterministic model.
\end{abstract}

%
\begin{keyword}[class=AMS]
\kwd[Primary ]{60J80}
\kwd[; secondary ]{60J85}
\kwd{92D25}.
\end{keyword}
\begin{keyword}
\kwd{Branching random walk}
\kwd{multitype}
\kwd{speed}
\kwd{anomalous spreading}
\kwd{reducible}.
\end{keyword}

\end{frontmatter}

\section{Introduction}\label{intro}
The process starts with a single particle located at the origin. This
particle produces daughter particles, which are scattered in $\mbR$, to
give the first generation. These first-generation particles produce
their own daughter particles to give the second generation, and so on.
As usual in branching processes, the $n$th-generation particles
reproduce independently of each other. Particles have types drawn from
a finite set, $\cS$, and the distribution of a particle's family
depends on its type. More precisely, reproduction is defined by a point
process (with an intensity measure that is finite on bounded sets) on
$\cS\times\mbR$ with a distribution depending on the type of the
parent. The first component of the point process determines the
distribution of that child's reproduction point process, its type, and
the second component gives the child's birth position relative to the
parent's. Multiple points are allowed, so that in a family there may be
several children of the same type born in the same place.

Let $Z$ be the generic reproduction point process,
with points $\{(\sigma_i, z_i)\}$, and $Z_\sigma$
the point process (on $\mbR$) of those of type $\sigma$.
Let $\tP_{ \nu}$ and $\tE_\nu$ be
the probability and expectation associated
with reproduction from a parent with type $\nu\in\cS$. Thus,
$\tE_\nu Z_\sigma$ is the intensity measure of the positions of
children of
type $\sigma$ born to a parent of type $\nu$ at the origin.
The usual Markov-chain classification ideas can be used to classify the types:
the type-space is divided, using
the relationship ``can have a descendant of this type,'' into
self-communicating classes,
each of which corresponds to an irreducible multitype branching process.
Two types are in the same class exactly when each can have a descendant,
in some generation, of the other type. A class will be said to
precede another if the first can have descendants in the second,
and then the second will be said to stem from the first.

Let $Z^{(n)}$ be the $n$th-generation point process. Let
$Z^{(n)}_\sigma
$ be the points
of~$Z^{(n)}$ with type $\sigma$. Later,
exponential moment conditions on the intensity measure of $Z$
will be imposed that ensure these are
well-defined point processes
(because the expected numbers in bounded sets are finite).
Let $\cF{n}$ be the information on all families with the parent in a
generation up to and including $n-1$. Hence $Z^{(n)}$ is known when
$\cF
{n}$ is known.
Let $m(-\theta)$ be the nonnegative matrix of the Laplace transforms
of the
intensity measures~$\tE_\nu Z_\sigma$:
\[
(m(\theta))_{ \nu\sigma}=\int e^{\theta z} \tE_\nu
Z_\sigma(dz)=
\tE_\nu
\biggl[\int e^{\theta z} Z_\sigma(dz)
\biggr].
\]
Then it is well known, and verified by induction, that the powers of
the matrix $m$ provide the transforms of the intensity measures $\tE
_\nu Z^{(n)}_\sigma$:
%
%
\begin{equation}\label{powm}
\tE_\nu\biggl[\int e^{ \theta z} Z^{(n)}_\sigma(dz)\biggr]= \int e^{
\theta z} \tE_\nu Z^{(n)}_\sigma(dz)=( m(\theta )^n )_{ \nu\sigma}.
\end{equation}
Let
$\rB{\sigma}{n}$
be the rightmost particle of type $\sigma$ in the $n$th generation,
so that
\[
\rB{\sigma}{n}= \sup\bigl\{z\dvtx z \mbox{ a point of } Z^{(n)}_\sigma\bigr\}
\]
and let $\rBo{n}$ be the rightmost of these.

When the collection of types is irreducible, so that any type can
occur in the line of descent of any type,
and there is a $\phi>0$ such that
%
%
\begin{equation}\label{mgood}
\sup_{\nu,\sigma} (m(\phi))_{ \nu\sigma}< \infty,
\end{equation}
there is a constant $\Gamma$ such that
%
%
\begin{equation}\label{limit}
\frac{\rBo{n}}{n}\rightarrow\Gamma\qquad\mbox{a.s.-}\tP_{ \nu},
\end{equation}
when the process survives. When this holds the speed, starting in $\nu
$, is~$\Gamma$. This result is in Biggins
[(\citeyear{MR0420890}), Theorem 4] and, in
a more general framework
where time is not assumed discrete, in Biggins
(\citeyear{MR1601689}), Section 4.1.
Furthermore, with the\vadjust{\goodbreak} obvious adjustment for periodicity,
the same result holds with $\rB{\sigma}{n}$ in place of $\rBo
{n}$---when the type set is aperiodic this is
in \citet{my-thesis}, Corollary V.4.1.
The theory for the irreducible process also
provides various formulas
for $\Gamma$ in terms of the reproduction process. The question addressed
here is what happens when the set of types is reducible.

Write the transpose of $m$ in
the canonical form of a nonnegative matrix,
described in Seneta (\citeyear{MR0389944,MR719544}), Section 1.2.
This amounts to ordering the rows, and the labels on the classes,
so that when one class stems from another
it is also later in the ordering. Then there are
irreducible blocks, one for each class,
down the diagonal and all other nonzero entries in $m$ are above
this diagonal structure. Having done this, call the first class,
$\first$,
the second $\conC{2}$ up to the final one $\last$.
Intermediate classes need not be totally ordered by ``descends from,''
so their ordering need not be unique.

Any irreducible matrix has a ``Perron--Frobenius'' eigenvalue (which is
positive, is largest in modulus
and has corresponding left and right eigenvectors that are strictly
positive)---see Seneta (\citeyear{MR0389944,MR719544})
or \citet{MR792300}. For $\theta\geq0$,
let $\exp(\kappa_i(\theta))$ be the ``Perron--Frobenius'' eigenvalue of
the $i$th irreducible block, which is infinite when any entry is
infinite. Let $\kappa_i(\theta)=\infty$ for $\theta<0$; this is
just a
device to simplify the formulation, since the development concerns only
the right tails of the measures---left tails and the
consideration of the leftmost particle are just the mirror image. Call
$\kappa_i$ the \PF eigenvalue of the corresponding matrix, which with
these definitions is not necessarily its ``Perron--Frobenius'' eigenvalue
for strictly negative arguments.
As Laplace transforms, the logarithms of the nonzero entries in $m$ are convex.
Then $\kappa_i$ is convex---see Lemma~\ref{analytic} below.

Let $\PhiD( f)$ be the set where the function $f$ is not
$+\infty$, so that
$ \PhiD( f)=\{\theta\dvtx f(\theta)<\infty\}
$. Thus in the irreducible case~(\ref{mgood}) is equivalent to $\PhiD
( \kappa)\cap(0,\infty)\neq\varnothing$. Furthermore,
since each $\kappa
_i$ is convex, $\PhiD( \kappa_i)$ must be an interval in
$[0,\infty)$.
For any two classes $\conC{i}$ and $\conC{j}$
let
\[
\PhiD_{i,j}= \bigcap
\{\PhiD( m_{ \nu\upsilon})\dvtx\nu\in\conC{i},
\upsilon\in\conC
{j}, m_{\nu\upsilon}>0\},
\]
which is the set where all of the entries in $m$ linking $\conC{i}$ to
$\conC{j}$ are finite.
For any set of reals $A$ let $A^+$ be all values either in $A$
or greater than
those in~$A$. Thus, $\Phidp{f}$ has the form $[\varphi,\infty)$
or $(\varphi, \infty)$, depending on whether $f(\varphi)$ is finite
or not.

Without loss of generality, assume that the initial type $\nu$ is in
the first class, $\conC{1}$, and that the speed is sought for a type
$\sigma$ in the final class, $\conC{K}$. Write $i \comms j$ if some
$\nu\in\conC{i}$ can have a child (i.e., an immediate descendant)
with a type in $\conC{j}$ and write $i \bef j$ when
$i$ precedes $j$ so that types in class~$\conC{i}$ can have
descendants in some later generation with types in class $\conC{j}$.
Assume also, again without loss, that every other class stems from the
first and precedes the last.
It is now possible to give a result that illustrates the nature of the
result on speed without the weight of additional notation needed for
its proof or for the results which establish rather more.
%
%
\begin{theorem}\label{overall} Let $\nu\in\conC{1}$, $\sigma\in
\conC{K}$.
Suppose that the process made up of individuals in $\conC{1}$ alone is
supercritical and aperiodic (i.e., the mean matrix is primitive and
has ``Perron--Frobenius'' eigenvalue greater than 1) and survives with
probability 1.
Assume that
%
%
\begin{eqnarray}\label{phiiorder}
&&\mbox{there are }\phi_i \in\PhiD( \kappa_{i}) \qquad\mbox{with }
0<\phi_1\leq\phi_2\leq\cdots\leq\phi_K
\\
\label{phiiorder-two}
&&\mbox{and }
\Phidp{\kappa_{i}} \cap\PhiD( \kappa_{j}) \subset\PhiD
_{i, j} \qquad\mbox{whenever } i \comms j.
\end{eqnarray}
Then
\[
\frac{\rB{\sigma}{n}}{n} \rightarrow \Gamma =\max_{i \bef j}
\inf_{0<\varphi\leq\theta} \max\biggl\{\frac{\kappa_i(\varphi)}
{\varphi},\frac{\kappa_j(\theta)} {\theta}
\biggr\}\qquad\mbox{a.s.-}\tP_{ \nu}.
\]
The conditions~(\ref{phiiorder}) and~(\ref{phiiorder-two}) both
hold when the domain of finiteness of every nonzero entry in the matrix
$m$ has the same nonempty intersection with $[0, \infty)$.
\end{theorem}

This result, other than the actual form of the limit, will be derived
as a by-product of a result on the size of $Z^{(n)}_{\sigma}[na,\infty
)$ described later,
in Theorem~\ref{maintheorem}. That approach to deriving the speed was
used for the one-type process in \citet{MR0464415} and for the
irreducible process in \citet{MR1601689}, Section~4.1. The
comparatively simple formula for the limit here is one of the main
achievements of this study.
One interpretation of this formula for the speed
is the following:
look at each pair of
classes where one precedes the other,
compute the speed as though these were the only classes present, and then
maximize over all such pairs.

It is probably worth being explicit about some of the assumptions that
are not made in Theorem~\ref{overall} and the other main theorems.
First, the point processes $Z$ are not
constrained to have only a finite number
of points. The conditions
do mean that there are only a finite number
of points in any finite interval, but they do
not prevent intervals of the form $(-\infty,a]$
from having an infinite number of points.
Second, classes after the first one do
not have to be supercritical. Third,
classes after the first one do not have
to be primitive. Finally, it is not
assumed that the dispersal in a class
is ``nondegenerate,'' so $\kappa_i$
could be linear in $\theta$ when finite, which for a one-type class
corresponds to a deterministic displacement of the family from the parent.

An initially unexpected phenomenon is contained within Theorem~\ref{overall}.
Its essence can be indicated even
in the reducible two-type case. Suppose type~$a$
can give rise to both type $a$ and type $b$ particles but
type $b$ give rise only to type $b$. Type $a$ or $b$
considered alone forms a one-type branching random walk with speed
$\Gamma_{a}$ or $\Gamma_{b}$, respectively.
At first sight, it seems plausible that,
when $\Gamma_{a}>\Gamma_{b}$, both types spread at speed $\Gamma_{a}$,
driven by the type $a$ particles, and that otherwise, when
$\Gamma_{a} \leq\Gamma_{b}$, the two types move at their own speeds.
This plausible conjecture can be false; it is possible to find examples where,
in the presence of type $a$,
the type $b$
speed can be faster than $\max\{\Gamma_{a},\Gamma_{b}\}$.
The fundamental reason for this ``super-speed'' phenomenon
is that the speed of spread is caused by the interplay
between the exponential growth of the population size
and the exponential decay of
the tail of the dispersal distribution. It is possible
for the growth in numbers of type $a$, through the numbers
of type $b$ they produce, to increase the speed of
type~$b$ from that of a population without type~$a$.
When the type $a$ dispersal distribution
has comparatively light tails, that speed can exceed
also that of type $a$. In this cartoon version,
to get ``super-speed'' we need the population of $a$'s to
grow quickly but the $b$'s to have more chance of
dispersing a long way. This also indicates a
complication. There are two possible sources for
a~comparatively heavy-tailed distribution of the $b$'s.
It could be that the $a$'s, in producing children of
type $b$, disperse them widely, or it could be that
type~$b$'s, in producing $b$'s, produce more spread
than type $a$'s producing $a$'s. Either effect can
influence the speed of the $b$'s. In Theorem~\ref{overall}, (\ref
{phiiorder}) concerns the growth and dispersion within each
irreducible class while~(\ref{phiiorder-two}) controls the dispersion
involved in moving between classes.
The interpretation given above of the formula for the speed shows
that, normally,
the two-type illustration of super-speed is archetypal---there
is no possibility of additional ``cooperation''
from three or more
classes that cannot be exhibited
with just two.

The stimulus for considering this problem was the work of
\citet{MR2322849}, where a deterministic version is discussed and the
phenomenon of ``super-speed,'' which they call ``anomalous spreading
speed,'' is identified---although there the actual speed is not
identified. They also explore the relevance of the phenomenon in a
biological example. There are close relations between these
deterministic models---and also certain continuous-time ones which
involve coupled reaction-diffusion equations---and the branching models
examined here. A~discussion of this connection, which is more than an
analogy, and further illustration of the ``super-speed'' phenomenon based
on applying the results here in the two-type case can be found in the
second half of \citet{MR2744237}.

It turns out that the results for the general case rest on those for
a more restricted class of processes.
A multitype branching process will be called
sequential when each class has children only in its
own class and the next one and there is exactly one
pair of types linking successive classes.
Thus there is just one route through the classes $\conC{1}, \ldots,
\conC{K}$, corresponding to the order of the indices. Also, for
$i=1,\ldots, K - 1$, there is exactly one type in~$\conC{i}$ that can
produce offspring in~$\conC{i+1}$, and just one type of offspring in
$\conC{i+1}$ that it can produce.
The next section describes most of the main results, which concern
sequential processes. The shape of the remainder of the paper will be
indicated in the course of that section and the subsequent one.

\section{Results for the sequential case}
\label{main1}
Throughout this section, the process will be assumed sequential. In the
following one the main results for the general process are given.
Several transformations of functions will be needed to describe the
results. The first is a
version of the Fenchel dual (F-dual) of the function $f$,
given by the convex function
%
%
\begin{equation}\label{Fencheldual}
f^{\ast}(x)=\sup_\theta\{\theta x-f(\theta)\}.
\end{equation}
The second is sweeping strictly positive values to infinity:
let
\[
f^{\circ}(a)=\cases{
f(a), &\quad when $f(a)\leq0$,\cr
\infty,&\quad when $f(a)> 0$.}
\]
Also,
for any function $f$ let
%
%
\begin{equation}\label{speedp}
\Gamma(f)=\inf\{a\dvtx f(a)>0\}.
\end{equation}
Then $\Gamma(f)=\Gamma(f^{\circ})$.
It will also be convenient to have a notation for taking the F-dual and
then sweeping positive values to infinity, so
let
%
%
\begin{equation}\label{defnswFd}
f^{\acci}=(f^{\ast})^{\circ}.
\end{equation}
Various properties of such functions are described in Section
\ref{convexity}. In particular, $f^{\acci}$~is continuous when finite.
The next two results, which are for the case with only one class,
demonstrate why these functions will be useful. Both results are given,
with an indication of their proofs, in Biggins [(\citeyear{MR1601689}),
Section~4.1], and will be discussed further in Section~\ref{primitive},
where various results for the irreducible case that are necessary
preliminaries for the main proofs are obtained.
%
%
\begin{propn} \label{behaviour1}Suppose that there is just one class
of types,
that the exponential moment condition~(\ref{mgood}) holds and that
the matrix $m$ is primitive with \PF eigenvalue $\kappa$. Let
$\Up$ be the upper end-point of the interval on which $\kappa^{\ast
}$ is
finite. Then, for $a \neq\Up$,
%
%
\begin{equation}\label{meanexpgrowthub1}
\lim_n \frac{1}{n}\log\bigl(\tE_\nu Z^{(n)}_\sigma[na, \infty)
\bigr)
=-\kappa^{\ast}(a).
\end{equation}
\end{propn}
%
%
\begin{propn} \label{supercrit} Under the conditions of Proposition
\ref
{behaviour1} and the additional assumption that the process is
supercritical [i.e., $\kappa(0)>0$] and survives with probability 1,
%
%
\begin{equation}\label{expgrowthub2}
\lim_n \frac{1}{n}\log\bigl(Z^{(n)}_\sigma[na, \infty) \bigr) = -
\kappa^{\acci}(a) \qquad\bigl(\mbox{$=$}(\kappa^{\ast})^{\circ}(a) \bigr)
\qquad\mbox{a.s.-}\tP_{ \nu}
\end{equation}
for $a\neq\Gamma(\kappa^{\ast})$ and
\[
\frac{\rB{\sigma}{n}}{n}\rightarrow\Gamma(\kappa^{\ast})=\Gamma
(\kappa^{\acci}) \qquad\mbox{a.s.-}\tP_{ \nu}.
\]
\end{propn}

In this case, there is a simple relationship between the behavior of
$Z^{(n)}_{\sigma}[na,\break\infty)$ and its expectation.
When the expectation decays (geometrically) in~(\ref
{meanexpgrowthub1}) the actual numbers, described by (\ref
{expgrowthub2}), are
ultimately zero, leading to the limit there being infinite (which
explains the sweeping to infinity). On the other hand, when expected
numbers grow the actual numbers grow in the same way. Thus the
``expectation-speed'' and the ``almost-sure-speed'' are the same [and are
both $\Gamma(\kappa^{\ast})$].
In the reducible process this need not be so---the
``expectation-speed''
can overestimate the ``almost-sure-speed.'' The discussion here will
concentrate on the ``almost-sure-speed,'' but expected numbers, which are
easier to study,
will be considered briefly in Section~\ref{expnumbers},
mainly to illustrate the point just made.

The result on the speed in Proposition~\ref{supercrit} is a consequence
of the asymptotic behavior of $n$th-generation numbers in intervals of
the form $(-\infty, na]$. The same basic approach is used to study
reducible sequential processes. There are two
parts to this: showing that a suitable function forms a lower bound
and then showing that it also forms an upper bound. As might be
anticipated from the role of the moment condition~(\ref{mgood}) in the
irreducible case, conditions on the finiteness of the entries in $m$
are needed.
For the simplest lower bound these conditions will only concern the
entries in the irreducible blocks of $m$, as in
(\ref{phiiorder}). But for the upper bound the ``off-diagonal''
entries have to be controlled too, leading to conditions like (\ref
{phiiorder-two}).
The basic idea for obtaining both bounds is to use induction on the
number of classes, with the formula for the bounds being given by
suitable recursions.

Certain properties of the limit $\kappa^{\acci}$ in (\ref
{expgrowthub2}), which is
a rate function in the large deviations' sense,
are sufficiently important here to merit a name.

%
\begin{defn} \label{defr-function}A function will be called an $r$-function
if it is increasing and convex, takes a value in
$(-\infty,0)$, is continuous from the left and is infinite when
strictly positive.
\end{defn}

Whenever $r$ is an $r$-function $\Gamma(r)>-\infty$.
Lemma~\ref{r-funct} shows that $\kappa^{\acci}$ is an \mbox{$r$-function}.

The next theorem, which is proved in Section~\ref{lower}, gives a lower
bound on the numbers, and hence on the speed.
A notation for the convex minorant is needed.
For any two functions $f$ and $g$, let ${\mathfrak C}[f,g]$
be the greatest lower semi-continuous convex function
beneath both of them. (The restriction to lower semi-continuous
functions only affects values at the end-points of the set on which a convex
function is finite.)
%
%
\begin{theorem} \label{prelimmaintheorem} Consider a sequential process
with $K$ classes, $\conC{1}, \ldots, \conC{K}$, with
corresponding \PF eigenvalues $\kappa_1,\ldots,\kappa_K$ and in which
$\conC{1}$, considered alone, is
primitive, supercritical and survives with probability 1.
Assume that~(\ref{phiiorder}) holds.
Define $r_i$ recursively:
%
%
\begin{equation}
\label{firstrecur}\qquad
r_1= \kappa^{\acci}_1 \qquad\bigl(\mbox{$=$}(\kappa^{\ast}_1)^{\circ}
\bigr);\qquad r_{i}={\mathfrak C}[r_{i-1},\kappa^{\ast}_i]^{\circ}
\qquad\mbox{for }
i=2,\ldots, K.\vadjust{\goodbreak}
\end{equation}
Then
for $\nu\in\conC{1}$, $\sigma\in\conC{K}$ and $a \neq\Gamma(r_K)$
%
%
\begin{eqnarray}\label{keyresultlb}
\liminf\frac{1}{n} \log
\bigl(Z^{(n)}_{\sigma}[na,\infty)
\bigr) &\geq&- r_K(a) \qquad\mbox{a.s.-}\tP_{ \nu},
\\
%
%
\label{seqspeedlb}
\liminf_n \frac{\rB{\sigma}{n}}{n} &\geq&
\Gamma(r_K) \qquad\mbox{a.s.-}\tP_{ \nu}
\end{eqnarray}
and $r_K$ is an $r$-function.
\end{theorem}

The first complement to this lower bound is presented next. Once
additional ideas have been introduced, Theorem~\ref{f=g} will give the
same conclusions under weaker conditions.
%
%
\begin{theorem} \label{maintheorem}
In the setup and conditions of Theorem~\ref{prelimmaintheorem},
suppose also that, for $i=1,2,\ldots,K - 1$,
%
%
\begin{equation}\label{off-diag-cond}
\biggl( \bigcap_{j \leq i} \Phidp{\kappa_j}\biggr)\cap\PhiD
( \kappa_{i+1}) \subset\PhiD_{i, i+1}.
\end{equation}
Then
{\renewcommand{\theequation}{Nu}
\begin{equation}
\label{keyresult}
\frac{1}{n} \log
\bigl(Z^{(n)}_{\sigma}[na,\infty) \bigr) \rightarrow-r_K(a)
\qquad\mbox{a.s.-}\tP_{ \nu}
\end{equation}}

\noindent for $a \neq\Gamma(r_K)$, and
{\renewcommand{\theequation}{Sp}
\begin{equation}
\label{seqspeed}
\frac{\rB{\sigma}{n}}{n} \rightarrow\Gamma(r_K)
\qquad\mbox{a.s.-}\tP
_{ \nu}.
\end{equation}}
\end{theorem}

The condition~(\ref{phiiorder}) ensures that the set on the left in
(\ref{off-diag-cond}) contains~$\phi_i$, and so is not empty.
Note that~(\ref{phiiorder}) and~(\ref{off-diag-cond}) just involve
comparing the domains of finiteness
of the entries in $m$. Hence these conditions are easily
applied in the general (nonsequential) case. Note too that (\ref
{phiiorder-two}) in Theorem~\ref{overall} is a stronger assumption
than~(\ref{off-diag-cond}) in this theorem.

To describe the remaining results in this section, one further transformation
is needed. As can be seen from Proposition~\ref{supercrit},
the critical function when looking at actual numbers in the first class
is $\kappa^{\acci}$ (rather than $\kappa^{\ast}$).
Typically, there will be a
$\vartheta\in(0,\infty)$ such that for $a \leq\Gamma(\kappa
^{\ast})$
\[
\kappa^{\ast}(a)=\sup_\theta\{\theta a-\kappa(\theta)\}=\sup
_{\theta
\leq\vartheta} \{\theta a-\kappa(\theta)\}.
\]
Then, with
$\natt{\kappa}(\theta)=\kappa(\theta)$ for $\theta\leq\vartheta
$ and
$\natt{\kappa}(\theta)= \theta\Gamma(\kappa^{\ast}) $ for
$\theta>
\vartheta$, it turns out that $\kappa^{\acci}$ is the F-dual of
$\natt
{\kappa}$, that is, $\kappa^{\acci}=(\natt{\kappa})^{\ast}$.
Thus,
in examining how actual numbers in the first class influence numbers in
the second,
$\natt{\kappa}$ should replace~$\kappa$. This means that the shape of
$\kappa$
only matters up to a certain point, after which it is replaced by a
suitable linear function. The details of $\kappa$ beyond this point
have become irrelevant because they only influence $\kappa^{\ast}$ at
positive values, which are swept to infinity.\vadjust{\goodbreak}

Although this motivation is on the right lines, it turns out that the
actual definition
of the transformation is better framed somewhat differently in order to
cover all cases.
It will also be useful to have a name for the class of functions the
transformation will apply to. Under the conditions of Proposition \ref
{behaviour1}, $\kappa$ satisfies the next definition.
%
%
\begin{defn}\label{defk-convex}
A function is
$k$-convex if it is convex, finite for some $\theta>0$ and infinite for
all $\theta<0$.
\end{defn}

The pointwise supremum of a collection of convex functions is convex, and
that of a collection of monotone functions is monotone.
Hence, for $k$-convex $f$, it makes sense to
define
$\nat{f}$ to be the maximal convex function
such that $\nat{f}\leq f$ and $\nat{f}(\theta)/\theta$
is monotone decreasing in $\theta\in(0,\infty)$. This function will be
identically minus infinity if there are no functions satisfying the constraints.
Now let
%
%
\setcounter{equation}{9}
\begin{equation}\label{defnvartheta}
\varthetapp{f}=\sup\{\theta\dvtx f(\theta)=\nat{f}(\theta)\},
\end{equation}
where it is possible that $\varthetapp{f}=\infty$.
Proposition~\ref{sw-nat} will show that, in the typical case,
$\nat{\kappa}(\theta)$ is just the straight line $ \theta\Gamma
(\kappa^{\ast})$ for $\theta>\varthetapp{\kappa}$, and that line
is the
tangent to $\kappa$ at $\varthetapp{\kappa}$,
which connects this definition with the motivation offered in the
previous paragraph.

An alternative recursion for the $r$-functions defined by~(\ref{firstrecur})
in Theorem~\ref{prelimmaintheorem}
turns out to be more useful when considering upper bounds. This
alternative recursion is given in the next result. Let
$
{\mathfrak M}[f,g](\theta)= \max\{f(\theta),\break g(\theta)\}$.
%
%
\begin{propn}\label{alternativerec} Assume that~(\ref{phiiorder}) holds.
Define $f_i$ recursively:
%
%
\begin{equation}
\label{secondrecur}
f_1=\kappa_1;\qquad f_i={\mathfrak M}[\nat{f}_{i-1},\kappa_i]
\qquad\mbox{for }i=2,\ldots, K.
\end{equation}
Then $(\nat{f}_i)^{\ast}=f^{\acci}_i={\mathfrak M}[\nat
{f}_{i-1},\kappa_i]^{\acci}=r_i$.
\end{propn}

This is proved in Section~\ref{furtherconv}, along with a variety of
convexity results that contribute to deriving formulas for the speed.
The issues surrounding convexity are more complicated than might be
expected on the basis of the known results for the irreducible case.
For example, it is easy to construct (reducible) two-type examples
where $f_2$ and $r_2$ have properties that cannot arise in the one-type
(or irreducible) case. In particular, there are examples where $f_2$ is
linear (only) on a finite or a semi-infinite interval and where $r_2$
is linear (only) on a finite interval.

The notation has now been established to state a result giving
(\ref{keyresult}) and hence~(\ref{seqspeed})
in Theorem~\ref{maintheorem} under weaker conditions. The aim was to
make these conditions as general as is practicable, but that does mean
they are also quite complex. In Theorem~\ref{bestspeed},
(\ref{seqspeed}) will be established under yet weaker conditions.
Let $\psiu_{i}=\inf\PhiD_{i,i+1}$ and
$\psio_i=\sup\PhiD_{i,i+1}$.
%
%
\begin{theorem}\label{f=g}\label{altermaintheorem}
In the setup and conditions of Theorem~\ref{prelimmaintheorem},
suppose that~(\ref{phiiorder}) holds and that for $i=1,2,\ldots,K
- 1$,
%
%
\begin{equation}\label{newphiiorder}
\mbox{there are }\phi_{i,i+1} \in\PhiD_{i,i+1} \qquad\mbox{with }0<
\phi_i
\leq\phi_{i,i+1}\leq\phi_{i+1}.
\end{equation}
Let $f_i$ be as defined at~(\ref{secondrecur}).
Suppose that, for $i=1,2,\ldots,K - 1$,
%
%
\begin{equation}\label{eitheror1}\hspace*{28pt}
\mbox{either }
\kappa_{i+1}(\theta) \geq\theta\bigl(\nat{f}_i(\psio_i)/\psio_i\bigr)
\mbox{ for }\theta\in[\psio_i, \infty)\quad\mbox{or}
\quad\varthetapp{f_i}\leq
\psio_i
\end{equation}
and
%
%
\begin{equation}\label{ii}
\bigcap_{j \leq i} \Phidp{\kappa_j} \cap\PhiD( \kappa
_{i+1}) \subset
[\psiu_{i}, \infty).
\end{equation}
Then~(\ref{keyresult}) and~(\ref{seqspeed}) hold.
\end{theorem}

Complementing the lower bound in Theorem~\ref{prelimmaintheorem}
is a two-stage process, involving first deriving an upper bound and
then giving conditions for it to equal the lower bound.
The first stage is covered by the next result; its proof is in Section
\ref{upper}. Let $I(A)$ be the indicator function of $A$ and let
\[
\chi_{i}=-\log I(\PhiD_{i-1,i}) \qquad\mbox{for }i=2,\ldots,K,
\]
so that $\chi_{i}$ is zero on $\PhiD_{i-1,i}$ and infinity otherwise.
%
%
\begin{theorem}\label{newub}
Make the same assumptions as in Theorem~\ref{prelimmaintheorem}.
Define $g_i$ recursively:
%
%
\begin{equation}
\label{thirdrecur}
g_1=\kappa_1;\qquad g_i={\mathfrak M}[\nat{(\nat{g}_{i-1} + \chi
_i)},\kappa_i]
\qquad\mbox{for } i=2,\ldots,K.
\end{equation}
Then
%
%
\begin{equation}\label{newubeq}
\limsup_n \frac{1}{n} \log
\bigl(Z^{(n)}_{\sigma}[na,\infty)
\bigr) \leq-g^{\acci}_K(a) \qquad\mbox{a.s.-}\tP_{ \nu}
\end{equation}
and
%
%
\begin{equation}\label{seqspeedub}
\limsup_n \frac{\rB{\sigma}{n}}{n} \leq\Gamma(g^{\ast}_K)
\qquad\mbox{a.s.-}\tP_{ \nu}.
\end{equation}
Furthermore,
$-g^{\acci}_K(a)< \infty$ for all $a$
if~(\ref{phiiorder}) holds and~(\ref{newphiiorder}) holds for
$i=1,2,\ldots,K - 1$.
\end{theorem}

A key point from Proposition~\ref{alternativerec}, for the formulation
of the rest of the results in this section, is that
$(f^{\ast}_K)^{\circ}= f^{\acci}_K=r_K$.
Using this,
and comparing~(\ref{keyresultlb}) and~(\ref{seqspeedlb})
with~(\ref{newubeq}) and~(\ref{seqspeedub}), immediately gives the
following corollary.
%
%
\begin{cor}\label{newmain}
Make the same assumptions as in Theorem~\ref{prelimmaintheorem}. Then
(\ref{keyresult}) holds if $g^{\acci}_K=f^{\acci}_K$
and~(\ref{seqspeed}) holds if $\Gamma(f^{\ast}_K)=\Gamma(g^{\ast}_K)$.
\end{cor}

Thus, in the light of this corollary, proving Theorems \ref
{maintheorem} and~\ref{altermaintheorem} will entail showing that
the conditions imposed imply that $g^{\acci}_K=f^{\acci}_K$.
This is done in Section~\ref{firstproof}.\vadjust{\goodbreak}

It is possible that $\Gamma(g^{\ast}_K)
=\Gamma(f^{\ast}_K)$ even though $g^{\acci}_K$ and $f^{\acci}_K$
do not agree everywhere. Then the speed would be given through
(\ref{seqspeed}) of Theorem~\ref{maintheorem}, even though the
behavior of the numbers was not described by~(\ref{keyresult}).
To investigate this possibility,
alternative formulas for $g^{\acci}_K$ and for $f^{\acci}_K$
and their associated speeds are important.
Those formulas are given next.
The formula for $\Gamma(f^{\ast}_K)$ is critical in establishing the
simpler one given in Theorem~\ref{overall}.
Also, the formula for $\Gamma(f^{\ast}_K)$
is the same one that is obtained as the upper bound on the speed in a
deterministic model by Weinberger, Lewis and Li
[(\citeyear{MR2322849}), Proposition 4.1], so their bound can be
simplified, too.

The conventions that
$\PhiD_{0,1}=(0,\infty)$ and $\psio_{K}=\infty$ are now adopted.
It is worth noting that in~(\ref{gform1}) $\theta_K$ is fixed, but in
(\ref{speed-g}) it is one of the free variables in the optimization.
%
%
\begin{theorem}\label{gformullem1} For a sequential process as described
in Theorem~\ref{prelimmaintheorem}, let $g_K$ be given by (\ref
{thirdrecur}).
Then, for $0<\theta_K \in\PhiD^+_{K - 1,K}$,
%
%
\begin{equation}\label{gform1}
\frac
{g_K(\theta_K)}
{\theta_K}
=
\inf\biggl\{
\max_i\biggl\{\frac{\kappa_i(\theta_i)}
{\theta_i}\biggr\}
\dvtx
\theta_1 \leq\theta_2\leq
\cdots\leq\theta_K,
\theta_{i} \in\PhiD^+_{i-1,i},
\theta_{i} \leq\psio_{i}\biggr\}\hspace*{-40pt}
\end{equation}
and ${g_K(\theta_K)}=\infty$ for $0<\theta_K \notin\PhiD^+_{K -
1,K}$. Furthermore,
%
%
\begin{equation}\label{speed-g}
\Gamma(g^{\ast}_K)=\inf\biggl\{
\max_i\biggl\{\frac{\kappa_i(\theta_i)}
{\theta_i}\biggr\}
\dvtx
\theta_1 \leq\theta_2\leq
\cdots\leq\theta_K,
\theta_{i} \in\PhiD^+_{i-1,i},
\theta_{i} \leq\psio_{i}\biggr\}.\hspace*{-40pt}
\end{equation}
Let $f_K$ be given by~(\ref{secondrecur}).
These formulas hold with $f_K$ in place of $g_K$ on replacing $\PhiD
_{i,i+1}$ by $(0, \infty)$
(and $\psio_{i}$ by $\infty$)
for $i=1,2,\ldots,K - 1$.
\end{theorem}

Now, asking when the formulas for $\Gamma(g^{\ast}_K)$ and $\Gamma
(f^{\ast}_K)$ give the same result---that is, when the extra restrictions
in the optimization associated with the formula for $\Gamma(g^{\ast}_K)$
make no difference---leads to the following theorem. Both it and the
previous theorem are proved in
Section~\ref{formulaforGamma}, where a little more is also said about
formulas for $\Gamma(f^{\ast}_K)$.
%
%
\begin{theorem}\label{bestspeed}In the setup and conditions of Theorem
\ref
{prelimmaintheorem}, suppose (\ref{newphiiorder}),~(\ref{eitheror1})
and $\varthetapp{\kappa_{i+1}} \geq\psiu_{ i}$
all hold for $i=1,2,\ldots,K - 1$.
Then
$\Gamma(g^{\ast}_K)
=\Gamma(f^{\ast}_K)$ and~(\ref{seqspeed}) holds.
\end{theorem}

Theorem~\ref{newub} also
raises the question of whether
the upper bound there, when it is actually larger than the lower bound
in Theorem~\ref{prelimmaintheorem},
can be matched by a corresponding lower
bound. A full study of this is not attempted,
but some key results are given in the final section of the paper.

\section{From sequential to general}
\label{sequent}
The main idea here is to explain how in the general case
the number of particles of a specified type can be
decomposed using a finite collection of sequential branching processes.
Consider $\sigma\in\last$. Each particle of type $\sigma$ can be
labeled by the classes
that arise in its\vadjust{\goodbreak} ancestry, tracing back to the initial ancestor
in $\first$, and then by the particular types that link the successive
classes. This label will be called its genealogical type.
Thus, for example, the branching process arising from
\[
m= \pmatrix{
m_{11}&m_{12}&m_{13}&m_{14}\cr
0& m_{22}&0&m_{24}\cr
0& 0&m_{33}&m_{34}\cr
0& 0&0&m_{44}}
\]
contains exactly three routes through the classes from the first class
to the fourth, arising from
\[
\pmatrix{ m_{11}&m_{14}\cr0&m_{44}},\qquad \pmatrix{ m_{11}&m_{12}&0\cr0&
m_{22}&m_{24}\cr0&0&m_{44}} \quad\mbox{and}\quad \pmatrix{ m_{11}&m_{13}&0 \cr
0&m_{33}&m_{34}\cr0&0&m_{44}},
\]
and each particle in the final class arises from a line of descent
following one of these three.
For the second phase of the decomposition, each nonzero entry in $m_{14}$
specifies a different type within the first route.
Similarly, a pair of nonzero entries,
one drawn from $m_{12}$ and the other from $m_{24}$,
specifies a type within the second route.

Slightly more formally,
let $\ell$ be a label for genealogical type (so $\ell$ records which
classes occur in the
ancestry and which pairs of types link classes in that ancestry).
Now let $(\sigma, \ell)$ be an augmented type
that indicates those of type $\sigma$
with genealogical type
$\ell$. There are only a finite number of different genealogical types,
and, by definition,
%
%
\begin{equation}\label{seqdecomp}
Z^{(n)}_{\sigma}[na,\infty)=\sum_\ell
Z^{(n)}_{\sigma, \ell}[na,\infty).
\end{equation}
Furthermore, each genealogical type corresponds to a sequential
branching process embedded within the original one.

The next two results follow by straightforward argument from the
decomposition~(\ref{seqdecomp}) and the continuity
of $r$-functions when finite.
Note that the minimum of convex
functions need not be convex,
and so $r$ in this theorem
need not be convex, and hence need not be an $r$-function,
but it will share in the other properties of an $r$-function.
%
%
\begin{theorem} \label{genealogy}
Suppose that, for each $\ell$, there is an $r$-function, $r_\ell$ such that
\[
n^{-1} \log
\bigl(Z^{(n)}_{\sigma, \ell}[na,\infty)
\bigr) \rightarrow-r_\ell(a) \qquad\mbox{a.s.-}\tP_{ \nu}
\]
for all $a \neq\Gamma(r_\ell)$.
Then
\[
n^{-1}\log
\bigl(Z^{(n)}_{\sigma}[na,\infty)
\bigr) \rightarrow- r(a)=-\min_\ell\{r_\ell(a)\}
\qquad\mbox{a.s.-}\tP
_{ \nu}
\]
for all $a \neq\Gamma(r)$ and
\[
n^{-1}\rB{\sigma}{n}\rightarrow\Gamma(r) \qquad\mbox{a.s.-}\tP_{
\nu}.
\]
\end{theorem}
%
%
\begin{theorem} \label{genealogy1}
Suppose that for each $\ell$
%
%
\begin{equation}\label{ellspeed}
n^{-1}\rB{\sigma,\ell}{n} \rightarrow\Gamma_\ell
\qquad\mbox{a.s.-}\tP
_{ \nu}.
\end{equation}
Then
\[
n^{-1}\rB{\sigma}{n} \rightarrow\Gamma= \max_{\ell}\Gamma_\ell
\qquad\mbox{a.s.-}\tP_{ \nu}.
\]
\end{theorem}

Obviously Theorems~\ref{genealogy} and~\ref{genealogy1} can be applied
to get the overall speed when~(\ref{keyresult}) and~(\ref{seqspeed}),
respectively,
hold for every embedded sequential process.
The next result shows that this overall speed is often
not as difficult to calculate as
at first appears. Its proof will be described in Section~\ref{simplespeed}.
%
%
\begin{theorem}\label{pairwiseformula} Suppose that
(\ref{ellspeed}) holds for each embedded sequential process with
$\Gamma_\ell=\Gamma(r_\ell)$ and its associated $r_\ell$ given by the
recursion~(\ref{firstrecur}) in Theorem~\ref{prelimmaintheorem}. Let
$\Gamma$ be the maximum speed obtained as in Theorem~\ref{genealogy1}.
Then
\[
\Gamma= \max_{i \bef j} \{ \Gamma({\mathfrak C}[\kappa
_i^{\acci},\kappa_j^{\ast}])\} =\max_{i \bef j}
\inf_{0<\varphi\leq\theta} \max\biggl\{\frac{\kappa_i(\varphi)}
{\varphi},\frac{\kappa_j(\theta)} {\theta} \biggr\}.
\]
\end{theorem}
\begin{pf*}{Proof of Theorem~\ref{overall}}
The conditions ensure that Theorem~\ref{maintheorem} holds for each
embedded sequential process. Then Theorem~\ref{pairwiseformula} gives
the result.
\end{pf*}

\section{Preliminaries}\label{convexity}

The section introduces various notation and gives some preliminary
results on convexity, drawing heavily on other sources. Further
convexity results that are more particular to this study will be
obtained in later sections.

A convex function is called proper when it is finite somewhere.
A proper convex function is called closed when it is lower
semi-continuous---see Rockafellar [(\citeyear{MR0274683}),
Section 7, page 52] for a full
discussion.
For a convex function on $\mbR$ that is finite
on a nonempty interval, this is the same as demanding
continuity from within at the endpoints of its
domain of finiteness.
The closure $\cl{f}$ of the proper
convex function $f$ on $\mbR$ is obtained
by adjusting the values of $f$ at these endpoints to make it closed.
Thus $\cl{f} \leq f$. By definition, an $r$-function is proper and closed
and so at first sight the nature of the results might suggest that
attention could be restricted throughout to closed convex functions.
However, this is not so.
By using the off-diagonal entry in~$m$, it is easy to construct
(reducible) two-type examples where $g_2$ [given by the recursion
(\ref{thirdrecur})] is not closed
(by being bounded on an open interval but infinite at one of its endpoints).
%
%
\begin{lem}\label{kh}\label{firstconvlem}
\textup{(i)}
\label{firstconvlemi} When $f$ is convex, $f^{\ast}$ is a closed
convex function, as is
$f^{\acci}$ provided it is finite somewhere, and $(f^{\ast})^{\ast
}=\cl{f}$.\vspace*{-6pt}
{
\renewcommand\thelonglist{(ii)}
\renewcommand\labellonglist{\thelonglist}
\begin{longlist}
\item\label{firstconvlemii} If $f$ and $g$ are convex functions,
then so is ${\mathfrak M}[f,g]$ and,
provided ${\mathfrak M}[f,g]$ is finite somewhere,
$
{\mathfrak M}[f,g]^{\ast}={\mathfrak C}[f^{\ast},g^{\ast}]$.\vadjust{\goodbreak}
\end{longlist}
}
\end{lem}
\begin{pf} The first part is all contained in Rockafellar
[(\citeyear{MR0274683}), Theorem~12.2],
except for the claim about $f^{\acci}$, which follows easily from its
definition at~(\ref{defnswFd}).
The first part of~\ref{firstconvlemii} follows directly from the definitions
and the second is in \citet{MR0274683}, Theorems 9.4, 16.5.\vspace*{-2pt}
\end{pf}
%
%
\begin{lem}\label{covex-d}
When $f$ is $k$-convex
(as introduced in Definition~\ref{defk-convex}):
{\renewcommand\thelonglist{(\roman{longlist})}
\renewcommand\labellonglist{\thelonglist}
\begin{longlist}
\item\label{covex-di}$f^{\ast}(a)>-\infty$ for all $a$;
\item\label{covex-dii}$f^{\ast}(a) \rightarrow\infty$ as $a
\uparrow
\infty$ and
$\Gamma(f^{\ast})<\infty$;
\item\label{covex-diii}$f^{\ast}$ is increasing;
\item\label{covex-div}$f^{\ast}(a)<\infty$ for some $a$;
\item\label{covex-dv}$f^{\ast}(a) \rightarrow- \cl{f}(0)$ as $a
\downarrow-\infty$;
\item\label{covex-dvi} $\Gamma(f^{\ast})>-\infty$ if and only if
$\cl
{f}(0)>0$.\vspace*{-2pt}
\end{longlist}}
\end{lem}
\begin{pf}
When $f(\phi)<\infty$,
$
f^{\ast}(a)\geq\phi a-f(\phi)>-\infty
$
giving~\ref{covex-di}, and, since $\phi>0$, letting $a \uparrow
\infty
$ gives~\ref{covex-dii}.
Furthermore, because $f(\theta)=\infty$ for $\theta<0$,
\[
f^{\ast}(a)=\sup_{\theta}\{\theta a -f(\theta)\}=\sup_{\theta\geq
0}\{
\theta a -f(\theta)\}
\leq\sup_{\theta\geq0}\{\theta a' -f(\theta)\},
\]
when $a' \geq a$, so $f^{\ast}$ is increasing in $a$.
Since $f$ is finite and convex there must be
finite $A$ and $B$ such that
$
f(\theta)\geq A \theta-B
$ for all $\theta$
and then \mbox{$f^{\ast}(A) \leq B$}, giving~\ref{covex-div}.
Part~\ref{covex-dv} follows
from Lemma~\ref{kh}(i) and \citet{MR0274683}, Theorem~27.1(a).
Part~\ref{covex-dvi} follows directly from~\ref{covex-diii},
\ref{covex-dv} and the definition of~$\speedps$.\vspace*{-2pt}
\end{pf}

The next result gives properties of $\kappa$ arising from irreducible $m$.
It is worth stressing that part~\ref{analyticiii} includes claims about
one-sided derivatives at the endpoints of $\PhiD( \kappa)$.\vspace*{-2pt}
%
%
\begin{lem}\label{analytic}
Suppose $\kappa$ is the \PF eigenvalue of an irreducible $m$ and that
(\ref{mgood}) holds:
{\renewcommand\thelonglist{(\roman{longlist})}
\renewcommand\labellonglist{\thelonglist}
\begin{longlist}
\item\label{analytici}$\PhiD( \kappa)$ is a (possibly degenerate)
interval containing the $\phi$ in~(\ref{mgood});
\item\label{analyticii}$\kappa$ is $k$-convex;
\item\label{analyticiii}$\kappa$ is continuous on the closure of
$\PhiD( \kappa)$, differentiable on
$\PhiD( \kappa)$ and analytic on its interior;
\item\label{analyticiv}$\kappa$ is closed.\vspace*{-2pt}
\end{longlist}}
\end{lem}
\begin{pf} Clearly~(\ref{mgood}) implies that $\kappa(\phi)<\infty$.
For convexity,
see \citet{MR0138632}, \citet{MR0126886}
and
\citet{MR0389944}, Theorem 3.7.
Part~\ref{analyticii} follows immediately from this and~(\ref{mgood}).
For analyticity on the interior, which is a
straightforward application of the implicit function theorem,
see Miller [(\citeyear{MR0126886}), Theorem 1(a)],
Lancaster and Tismenetsky [(\citeyear{MR792300}), Theorem 11.5.1]
or \citet{MR2156555}, Theorem 1(i).
Each entry in $m$ is continuous on the
closure of the set
where it is finite and so the same
must be true of $\kappa$. Hence, when $\kappa$ is
finite at the endpoint of the interval on which
it is finite,\vadjust{\goodbreak} Rockafellar [(\citeyear{MR0274683}), Theorem 24.1]
implies that the derivative extends continuously
to this endpoint, where the derivative at
the endpoint is the one-sided one from within the interval.
Part~\ref{analyticiv} follows directly from this and part \ref
{analytici}.\vspace*{-2pt}
\end{pf}

\section{The irreducible case}\label{primitive}

The discussion starts with a simple lemma which is easily deduced from
Seneta (\citeyear{MR0389944,MR719544}), Theorems 1.1, 1.5.\vspace*{-2pt}
%
%
\begin{lem}\label{matrix}
Let $M$ be an irreducible matrix with all its entries finite and nonnegative.
Then $M$
has a ``Perron--Frobenius'' eigenvalue (which is positive, and of largest
modulus) $e^\rho$,
and there is a finite $C$ that is independent of $n$, $\nu$ and
$\sigma
$ such that
$
e^{-n\rho}(M^n)_{ \nu\sigma} \leq C
$
and, for primitive $M$,
$
n^{-1}\log(M^n)_{ \nu\sigma} \rightarrow\rho$.\vspace*{-2pt}
\end{lem}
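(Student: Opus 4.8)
The plan is to invoke the Perron--Frobenius theory for finite non-negative matrices in the form recorded in \citet[Theorems 1.1, 1.5]{MR0389944,MR719544}, and then to read off the two quantitative assertions by elementary manipulations with the Perron eigenvectors. For irreducible $M$ with finite non-negative entries, \citet[Theorem 1.1]{MR0389944,MR719544} supplies a positive eigenvalue $e^{\rho}$ that is simple, is of largest modulus among the eigenvalues, and has strictly positive right and left eigenvectors $v$ and $w$ (so $Mv=e^{\rho}v$ and $M^{\top}w=e^{\rho}w$); this is the `Perron--Frobenius' eigenvalue named in the statement.

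For the upper bound, iterate to get $M^{n}v=e^{n\rho}v$. Since all entries of $M^{n}$ are non-negative, for every pair $\nu,\sigma$,
\[
(M^{n})_{\nu\sigma}\,v_{\sigma}\le (M^{n}v)_{\nu}=e^{n\rho}v_{\nu},
\]
so $(M^{n})_{\nu\sigma}\le e^{n\rho}\,v_{\nu}/v_{\sigma}$. Because the index set is finite and $v$ is strictly positive, $C:=(\max_{\nu}v_{\nu})/(\min_{\sigma}v_{\sigma})$ is finite and independent of $n$, and $\max_{\nu,\sigma}(M^{n})_{\nu\sigma}\le Ce^{n\rho}$ follows at once.

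For primitive $M$ one route is simply to quote \citet[Theorem 1.5]{MR0389944,MR719544}, which gives $e^{-n\rho}M^{n}\to v\,w^{\top}/(w^{\top}v)$ entrywise, a matrix with strictly positive entries; taking logarithms of the $(\nu,\sigma)$ entry and dividing by $n$ yields $n^{-1}\log(M^{n})_{\nu\sigma}\to\rho$. A self-contained variant avoids the full convergence. The upper bound already gives $\limsup_{n}n^{-1}\log(M^{n})_{\nu\sigma}\le\rho$, so it remains to bound below. From $M^{\top}w=e^{\rho}w$ one gets, for each fixed $\sigma_{0}$, $\sum_{\mu}w_{\mu}(M^{n})_{\mu\sigma_{0}}=e^{n\rho}w_{\sigma_{0}}$, hence $\sum_{\mu,\mu'}(M^{n})_{\mu\mu'}\ge\sum_{\mu}(M^{n})_{\mu\sigma_{0}}\ge e^{n\rho}w_{\sigma_{0}}/(\sum_{\mu}w_{\mu})$; choosing $\sigma_{0}$ to minimise $w$ gives $\sum_{\mu,\mu'}(M^{n})_{\mu\mu'}\ge c\,e^{n\rho}$ with $c:=(\min_{\sigma}w_{\sigma})/(\sum_{\mu}w_{\mu})>0$. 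Now pick $n_{0}$ with every entry of $M^{n_{0}}$ at least some $\delta>0$ (this is primitivity); then from $M^{n+2n_{0}}=M^{n_{0}}M^{n}M^{n_{0}}$, $(M^{n+2n_{0}})_{\nu\sigma}\ge\delta^{2}\sum_{\mu,\mu'}(M^{n})_{\mu\mu'}\ge\delta^{2}c\,e^{n\rho}$ for all $\nu,\sigma$. Dividing the logarithm by $n+2n_{0}$ and letting $n\to\infty$ gives $\liminf_{m}m^{-1}\log(M^{m})_{\nu\sigma}\ge\rho$, since every large $m$ has the form $n+2n_{0}$, and the limit follows.

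The \emph{only} point needing any care — and essentially the crux of the argument — is that strict positivity of the Perron eigenvectors, which is exactly where irreducibility (respectively primitivity) enters, keeps the multiplicative constants ($v_{\nu}/v_{\sigma}$ in the upper bound, and $\delta,c$ in the primitive case) bounded away from $0$ and $\infty$ uniformly in $n$; everything else is bookkeeping, so I anticipate no genuine obstacle.
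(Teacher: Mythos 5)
Your proof is correct and follows essentially the same route as the paper, which simply deduces the lemma from \citet[Theorems 1.1, 1.5]{MR0389944,MR719544} without writing out the details. Your eigenvector manipulations (and the optional self-contained lower bound via primitivity) just make explicit the ``easily deduced'' step, so there is nothing to add.
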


In this section it is assumed that there is just one class of types, so
the matrix $m$
is irreducible, that the exponential moment condition~(\ref{mgood})
holds and that $m$ has \PF eigenvalue $\kappa$.
In fact the matrix $m$ is assumed primitive up to the final result in
the section, where periodic $m$ are considered.
Though rather simple, that extension to periodic $m$
is important in establishing the main result.
Most results in this section are not
novel, though several are (I believe) new and their discussion
underpins later developments.
The first lemma is a simple upper bound on transforms that is an
ingredient in the upper bounds on numbers described in the proposition
that follows it.\vspace*{-2pt}
%
%
\begin{lem}\label{transbound}
\[
\limsup_n \frac{1}{n} \log
\biggl(\int e^{ \theta x}Z^{(n)}_{\sigma}(dx)
\biggr) \leq\kappa(\theta) \qquad\mbox{a.s.-}\tP_{ \nu}.
\]
\end{lem}
\begin{pf}Using~(\ref{powm}),
\[
\frac{1}{n} \log\int e^{\theta z} \tE_\nu Z^{(n)}_\sigma(dz)
=
\frac{1}{n} \log(m(\theta)^n)_{ \nu\sigma}.
\]
Lemma~\ref{matrix} implies that
\[
\limsup_n \frac{1}{n} \log
\biggl(\int e^{ \theta x} \tE_\nu Z^{(n)}_{\sigma}(dx)
\biggr) \leq\kappa(\theta) \qquad\mbox{a.s.-}\tP_{ \nu}
\]
and so for any $\varepsilon>0$ and then large enough $n$
\[
\frac{\tE_\nu\int e^{ \theta x} Z^{(n)}_{\sigma}(dx)}{ \exp
(n(\kappa
(\theta)+2\varepsilon))}
\leq
\exp(-n \varepsilon).
\]
This has a finite sum over $n$, giving the result.\vspace*{-2pt}
\end{pf}

The next proposition derives three upper bounds; the first concerns
expectations, the second the probabilities of certain ``extreme'' events
and the third actual numbers. These upper bounds on numbers are (nearly
always) exact: that is the content of Propositions~\ref{behaviour1},
\ref{subcritlower} and~\ref{expgrowthub2}, which are all needed later.\vadjust{\goodbreak}
%
%
\begin{propn}\label{oneclass}
For all $\sigma$, $\nu$, and $a$,
\begin{eqnarray*}
\limsup_n
\frac{1}{n} \log
\bigl(\tE_\nu Z^{(n)}_{\sigma}[na,\infty)
\bigr) &\leq&- \kappa^{\ast}(a),
\\[-2pt]
\limsup_n
\frac{1}{n}\log\bigl(\tP_{ \nu}\bigl(\rB{\sigma}{n}\geq na\bigr)
\bigr)&\leq&
\min\{- \kappa^{\ast}(a),0\}
\end{eqnarray*}
and
\[
\limsup_n \frac{1}{n} \log
\bigl(Z^{(n)}_{\sigma}[na,\infty)
\bigr) \leq- \kappa^{\acci}(a) \qquad\mbox{a.s.-}\tP_{ \nu}.
\]
\end{propn}
\begin{pf}
For $\theta\geq0$,
\[
e^{\theta na} \tE_\nu Z^{(n)}_\sigma[na, \infty)
\leq
\int e^{\theta z} \tE_\nu Z^{(n)}_\sigma(dz)
=
(m(\theta)^n)_{ \nu\sigma}
\]
so that
\[
\log
\bigl(\tE_\nu Z^{(n)}_\sigma[na, \infty)\bigr)
\leq- n \theta a + \log( (m(\theta)^n)_{ \nu
\sigma
}).
\]
Hence, for $\theta\geq0$, using Lemma~\ref{matrix},
\[
\limsup_n \frac{1}{n} \log
\bigl(\tE_\nu Z^{(n)}_\sigma[na, \infty)\bigr) \leq-\bigl(\theta
a-\kappa
(\theta)\bigr).
\]
Since $\kappa$ is defined to be infinite for $\theta<0$, this holds
for all $\theta$ and so
minimizing the right-hand side over $\theta$ gives the first bound. Since
\[
\tP_{ \nu}\bigl(\rB{\sigma}{n}\geq na\bigr)= \tE_\nu I\bigl(\rB{\sigma}{n}
\geq na\bigr)
\leq\tE_\nu Z^{(n)}_{\sigma}[na,\infty),
\]
the second follows directly from this. Turning to the third,
since
\[
e^{\theta na} Z^{(n)}_\sigma[na, \infty)
\leq
\int e^{\theta z} Z^{(n)}_\sigma(dz),
\]
Lemma~\ref{transbound},
gives
\[
\limsup_n \frac{1}{n} \log\bigl( Z^{(n)}_\sigma[na, \infty)\bigr)
\leq-\bigl(\theta a-\kappa(\theta)\bigr)
\qquad\mbox{a.s.-}\tP_{ \nu}
\]
and minimizing over $\theta$ gives the third bound,
with $\kappa^{\ast}$ in place of $\kappa^{\acci}$.
However, $Z^{(n)}_\sigma[na, \infty)$ is integer-valued
and so can only decay geometrically by being zero for all large $n$,
which implies
$\kappa^{\ast}$ can be replaced by $\kappa^{\acci}$.\vspace*{-2pt}
\end{pf}
\begin{pf*}{Proof of Proposition~\ref{behaviour1}}
This is just an application of suitable
large deviation theory based on
\[
\frac{1}{n} \log\int e^{\theta z} \tE_\nu Z^{(n)}_\sigma(dz)
= \frac{1}{n}\log
(m(\theta)^n)_{ \nu\sigma} \rightarrow\kappa(\theta)
\qquad\mbox{for } \theta>0,
\]
which holds by Lemma~\ref{matrix}. See Biggins
[(\citeyear{MR1384364}), Section 7]
for a little more detail on the method.\vspace*{-2pt}
\end{pf*}
%
%
\begin{propn} \label{behaviour1-extra}
\[
\sup_n \frac{1}{n}\log\bigl(\tE_\sigma Z^{(n)}_\sigma[na, \infty)
\bigr) = -\kappa^{\ast}(a).\vadjust{\goodbreak}
\]
\end{propn}
\begin{pf} Note that $a_n=\tE_\sigma Z^{(n)}_\sigma[na, \infty) $
is supermultiplicative ($a_{n+m}\geq a_n a_m$) and so
standard theory of subadditive sequences gives that the supremum
agrees with the limit, and the latter has already been identified in
Proposition~\ref{behaviour1}.
\end{pf}

The next result concerns the decay of the
probability of a particle appearing to the right of $na$.
For the one-type process \citet{MR877384}
gives a result similar to the next one under extra conditions and
Rouault [(\citeyear{MR1198661}), Theorem 2.1] gives a much sharper one.
The multitype case does not seem to have been discussed before.
%
%
\begin{propn}\label{subcritlower} For $a \neq\Up$,
\[
\frac{1}{n}\log\bigl(\tP_{ \nu}\bigl(\rB{\sigma}{n}\geq na\bigr)\bigr)
\rightarrow\min\{-\kappa^{\ast}(a),0\}.
\]
\end{propn}
\begin{pf}
Take $b$ with $b\neq U$ and $\kappa^{\ast}(b)>0$. Take $\varepsilon>0$.
Then, using Propositions~\ref{behaviour1} and~\ref{behaviour1-extra},
there is an $r$ such that
%
%
\begin{equation}\label{niceGW}
-\kappa^{\ast}(b)\geq\frac{1}{r}\log\bigl( \tE_\sigma
Z^{(r)}_{\sigma
}[rb,\infty)\bigr)
\geq-\kappa^{\ast}(b)-\varepsilon.
\end{equation}

Starting from an initial ancestor of type $\sigma$, regard as its
children all its descendants $r$ generations later of type $\sigma$ and
displaced at least $rb$ from
the initial particle's position.
Identify ``children'' of these children in the same way, and so on.
The resulting process is a (one-type) Galton--Watson process with mean
$\tE_\sigma Z^{(r)}_{\sigma}[rb,\infty)$. This process is
subcritical, because
$\exp(-r\kappa^{\ast}(b)) <1$.
Let $N^{(n)}$ be the number in its $n$th generation. Then, by
arrangement, when the initial ancestor is of type $\sigma$,
\[
N^{(n)} \leq Z^{(nr)}_{\sigma}[nrb,\infty)
\]
so that $N^{(n)}>0$ implies that $\rB{\sigma}{nr}\geq nrb$. Hence,
using Asmussen and Hering [(\citeyear{MR701538}), Theorem III.1.6] to
estimate $\tP(N^{(n)}>0)$,
\begin{eqnarray*}
\frac{1}{nr}\log\bigl(
\tP_{ \sigma}\bigl(\rB{\sigma}{nr}\geq nrb\bigr)
\bigr)
&\geq&
\frac{1}{nr}
\log\bigl(\tP\bigl(N^{(n)}>0\bigr)\bigr)
\\
&\rightarrow&
\frac{1}{r}
\log\bigl(\tE_\sigma Z^{(r)}_{\sigma}[rb,\infty)\bigr)
\\
&\geq&
-\kappa^{\ast}(b)-\varepsilon.
\end{eqnarray*}
Now, consider a process started from a type $\nu$. Because $m$ is
primitive, there is an $s$ such that $m^n$ has all entries strictly
positive for every $n \geq s$. Then, for a suitable~$T$, there is a
positive probability of a descendant in generation $s+r'$ of type
$\sigma$ and to the right of $T$ for each of $r'=0,1,2,\ldots,r-1$. Let
$p$ be the minimum\vadjust{\goodbreak} of these probabilities. For $b>a$, all sufficiently
large $n$ and $r'=0,1,2,\ldots,r-1$,
\begin{eqnarray*}
\tP_{ \nu}\bigl(\rB{\sigma}{nr+s+r'}\geq(nr+s+r')a\bigr)
&\geq&
\tP_{ \nu}\bigl(\rB{\sigma}{nr+s+r'}\geq nrb+T\bigr)
\\
&\geq&
p \tP_{ \sigma} \bigl(\rB{\sigma}{nr}\geq nrb\bigr).
\end{eqnarray*}
Therefore
\begin{eqnarray*}
\liminf_n \frac{1}{n}\log\bigl( \tP_{ \nu}
\bigl(\rB{\sigma}{n}\geq na\bigr)\bigr)
&\geq&
\liminf_n \frac{1}{nr}\log\bigl(\tP_{ \sigma} \bigl(\rB{\sigma
}{nr}\geq
nrb\bigr)\bigr)
\\
&\geq&
-\kappa^{\ast}(b)-\varepsilon.
\end{eqnarray*}
This holds for any $\varepsilon>0$ and $b>a$. Thus, since $\kappa^{\ast
}$ is
continuous from the right
except at $\Up$,
\[
\liminf_n \frac{1}{n}\log\bigl( \tP_{ \nu}
\bigl(\rB{\sigma}{n}\geq na\bigr)
\bigr) \geq\min\{-\kappa^{\ast}(a),0\}
\]
except possibly for $a=\Up$.
The upper bound in Proposition~\ref{oneclass} completes the proof.
\end{pf}
%
%
\begin{lem}\label{r-funct}
Suppose that the branching process is supercritical [i.e., $\kappa(0)>0$].
Then $\kappa^{\acci}$ is an $r$-function (as introduced at Definition
\ref
{defr-function}).
\end{lem}
\begin{pf}
Lemma~\ref{analytic} gives that $\kappa$ is $k$-convex and closed.
Also, $\kappa(0)>0$ because the process is supercritical. Hence, using
Lemma~\ref{covex-d}, $\kappa^{\ast}$ is increasing, less than zero
somewhere, and convex. Thus $\kappa^{\acci}$ is a proper convex
function that is strictly negative somewhere, left-continuous and
infinite when strictly positive and so is an $r$-function.
\end{pf}
\begin{pf*}{Proof of Proposition~\ref{expgrowthub2}}
The argument is very similar to that for Proposition
\ref{subcritlower}. It will be convenient to let $\Surv$ be the
survival set of the process, even though $\tP_{ \nu}(\Surv)=1$.
Proposition~\ref{oneclass} implies that~(\ref{expgrowthub2}) holds for
$a>\Gamma(\kappa^{\ast})$, with the limit being $-\infty$. Hence, only
$a< \Gamma(\kappa^{\ast})$ need to be considered. Take $b>a$ but with
$\kappa^{\ast}(b)<0$, which is possible because, by Lemma~\ref{r-funct},
$\kappa^{\acci}$ is an $r$-function, and take
$\varepsilon\in(0,-\kappa^{\ast}(b))$. As in Proposition~\ref{subcritlower},
use Propositions~\ref{behaviour1} and~\ref{behaviour1-extra}, to choose
$r$ such that~(\ref{niceGW}) holds. Start from an initial ancestor of
type $\sigma$, and identify the embedded (one-type) Galton--Watson
process as in Proposition~\ref{subcritlower}. This now has mean
$\tE_\sigma Z^{(r)}_{\sigma}[rb,\infty)$ and is supercritical, because
$\exp(-r(\kappa^{\ast}(b)+\varepsilon)) >1$. Let $N^{(n)}$ be the
number in
its $n$th generation. Then, using, for example, Asmussen and Hering
[(\citeyear{MR701538}), Theorems II.5.1, II.5.6] to get the limit of
$n^{-1}\log N^{(n)}$,
\begin{eqnarray*}
\frac{1}{nr}\log\bigl( Z^{(nr)}_{\sigma}[nrb,\infty) \bigr)
&\geq&
\frac{1}{nr}
\log N^{(n)}
\\
&\rightarrow&
\frac{1}{r}\log\bigl(\tE_\sigma Z^{(r)}_{\sigma}[rb,\infty)\bigr)
\\
&\geq&
-\kappa^{\ast}(b)-\varepsilon
\end{eqnarray*}
on the survival set of $N^{(n)}$, which has positive probability. Three
matters remain: allowing initial types different from $\sigma$; dealing
with generations that are not a multiple
of $r$; and showing the result holds almost surely on the survival set
of the whole process and not just that of some embedded one.
The argument for dealing with all three is standard,
and the idea is not complicated. It is to run the process to some large
generation, allow each type $\sigma$ then present to initiate its own
$N^{(n)}$, and then use
any that survives to provide a suitable lower bound. Here is a more
careful version.

Fix $\sigma$. Let $\{z_i^{(s)}\dvtx i\}$ be the points of
$Z^{(s)}_{\sigma
}$. Recall that $\cF{s}$ contains all information on families with the
parent in a generation up to and including $s-1$. Let
$N^{(n)}_{s,i}$ be the process $N^{(n)}$
initiated by the particle at $z_i^{(s)}$.
By arrangement, $N^{(n)}_{s,i}$ contains points
in the $(nr+s)$th generation to the right of $nrb+z_i^{(s)}$.
Given $\cF{s}$, these processes are independent.
Let $\Surv(s)$ be the
event that at least one of these processes survives.
Fix $s$ and $r'$.
For any $i$, for all large enough $n$,
$(nr+sr+r')a-z_i^{(sr+r')}\leq nrb$ and so
\[
Z^{(nr+sr+r')}_{\sigma}\bigl[(nr+sr+r')a,\infty\bigr)
\geq N^{(n)}_{(sr+r'),i}
\]
for all sufficiently large $n$. Hence
%
%
\begin{equation}\label{skeleton}
\liminf_{n} \frac{1}{(nr+r')}\log
\bigl(Z^{(nr+r')}_{\sigma}\bigl[(nr+r')a,\infty\bigr)\bigr)
\geq-\kappa^{\ast}(b)-\varepsilon
\end{equation}
on $\Surv(sr+r')$. Furthermore $\Surv(sr+r')\subset\Surv
((s+1)r+r')\subset\Surv$ and $\tP_{ \nu}(\Surv(sr+r'))\uparrow
\tP_{
\nu}(\Surv)$
as $r\uparrow\infty$. Hence~(\ref{skeleton}) holds almost surely on
$\Surv$ for each $r'=0,1,2, \ldots, r-1$. Also, it
holds for any $\varepsilon>0$ and every $b>a$. Since $\kappa^{\ast}$ is
continuous from the right at $a$,
this provides the lower bound to complement the upper bound in
Proposition~\ref{oneclass}.

Though it does not matter here,
it is perhaps worth noting that, because $Z^{(n)}_\sigma[na, \infty)$
is monotone in
$a$, the null set in~(\ref{expgrowthub2}) can be taken independent
of~$a$.
\end{pf*}

Since the proof of Theorem~\ref{altermaintheorem} will be by
induction on $K$ it is worth stating explicitly
that the induction starts successfully.
%
%
\begin{cor}\label{K=1}
When $K=1$, Theorem~\ref{altermaintheorem} holds.
\end{cor}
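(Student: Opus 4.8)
The plan is to observe that Corollary~\ref{K=1} is simply Proposition~\ref{supercrit} re-expressed in the notation of the main theorems, so the proof is a matter of matching hypotheses. First I would spell out what the assumptions of Theorem~\ref{main theorem} reduce to when $K=1$: there is one class $\conC{1}$, it is primitive (so $m$ is primitive) with \PF eigenvalue $\kappa=\kappa_1$, supercritical, and surviving with probability one; condition~(\ref{phi_i order}) collapses to the assertion that some $\phi_1>0$ lies in $\Phid{\kappa_1}$, which is exactly~(\ref{m good}); and~(\ref{off-diag-cond}), being indexed over $i=2,\ldots,K$, is empty. The recursion for the r-functions both starts and terminates at $\sh_1=\swFd{\kappa}_1$, so the target function $\sh_K$ is just $\swFd{\kappa}$.

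Next I would invoke Proposition~\ref{supercrit}, whose hypotheses --- those of Proposition~\ref{trans bound} together with supercriticality --- are precisely what has just been listed. It gives, on the survival set $\Surv$ and a.s.-$\tP_{\!\nu}$,
\[
\frac{1}{n}\log\left(Z^{(n)}_\sigma[na,\infty)\right)\longrightarrow -\swFd{\kappa}(a)=-\sh_K(a)
\]
for every $a\neq\speedp{\Fd{\kappa}}$, together with $\rB{\sigma}{n}/n\to\speedp{\Fd{\kappa}}=\speedp{\swFd{\kappa}}$ on $\Surv$. Since $\conC{1}$ survives with probability one, $\tP_{\!\nu}(\Surv)=1$, so the qualifier ``on $\Surv$'' may be dropped, turning these into the statements~(\ref{key result}) and~(\ref{seq speed}) of Theorem~\ref{main theorem}. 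It remains only to match the exceptional point: because $\Fd{\kappa}$ is increasing (Lemma~\ref{covex-d}), $\swFd{\kappa}(a)<0$ iff $\Fd{\kappa}(a)<0$, hence $\speedp{\sh_K}=\speedp{\swFd{\kappa}}=\speedp{\Fd{\kappa}}$, which is exactly the value excluded in~(\ref{key result}) and is the limiting speed in~(\ref{seq speed}). For completeness, Lemma~\ref{r-funct} gives that $\sh_1=\swFd{\kappa}_1$ is an r-function, so the $K=1$ instance of Theorem~\ref{prelim main theorem} holds as well.

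There is no real obstacle here: the mathematical content is entirely carried by Proposition~\ref{supercrit}, and the corollary is obtained by a change of notation plus two trivial observations --- that survival with probability one removes the conditioning on $\Surv$, and that the sweeping operation $\sweep{(\cdot)}$ does not move the point at which an increasing function changes sign, so $\speedp{}$ is unaffected by it. The only thing mildly worth stating explicitly is this last point, since it is what makes the single exceptional value $a=\speedp{\sh_K}$ in~(\ref{key result}) for $K=1$ coincide with the one, $a=\speedp{\Fd{\kappa}}$, appearing in Proposition~\ref{supercrit}.
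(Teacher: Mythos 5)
Your argument is correct and is essentially the paper's own proof: the paper likewise notes that for $K=1$ condition (\ref{phi_i order}) reduces to (\ref{m good}), uses Lemma \ref{r-funct} for the properties of $\swFd{\kappa}$, and obtains (\ref{key result}) from Propositions \ref{one class} and \ref{supercrit} (whose statement already contains the two-sided limit you invoke). Your extra remarks --- dropping $\Surv$ because survival has probability one, and the identification $\speedp{\swFd{\kappa}}=\speedp{\Fd{\kappa}}$ --- are just the bookkeeping the paper leaves implicit.
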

\begin{pf} For $K=1$, the condition~(\ref{phiiorder}) is
equivalent to~(\ref{mgood}) and the conditions~(\ref{newphiiorder}),
(\ref{eitheror1}) and~(\ref{ii}) are vacuous.
Proposition~\ref{supercrit} now gives the required conclusions.
\end{pf}

When $m$ is irreducible with period $d>1$, $m^d$ has $d$
primitive blocks on its diagonal, each with \PF eigenvalue $\kappa^d$.
These primitive blocks partition the types into $d$ subclasses.\vadjust{\goodbreak} The
next result deals with the case where $\nu$ and~$\sigma$ are in the
same subclass.
It is possible to say a bit more, dealing with $\nu$ and~$\sigma$
in different subclasses, but this is
not needed here.\vspace*{-2pt}
%
%
\begin{propn}\label{irredcase} If ``primitive'' is replaced by
``irreducible with period \mbox{$d>1$},'' then Propositions~\ref{behaviour1}
and~\ref{supercrit} and
all the results in this section continue to hold, provided ``$n $'' is
replaced by
``$nd $'' and $\nu$ and $\sigma$ come from the same subclass.\vspace*{-2pt}
\end{propn}
\begin{pf} Apply the results to the primitive process
obtained by only inspecting every $d$th generation.\vspace*{-2pt}
\end{pf}

\section{Lower bounds on numbers, main results}\label{lower}

The objective in this section is to prove Theorem~\ref{prelimmaintheorem}.
The main challenge is to show how in a sequential process the numbers
in the penultimate class contribute to
numbers in the final class. The first proposition shows two things:
that the numbers in the penultimate
class drive the numbers of those first in their line of descent to be
in the final class and
that those numbers drive the first in the line of descent of any other
type in the final class. To discuss this,
let $F^{(n)}_{\sigma}$ be the point process of those in generation $n$
of type $\sigma$ that are first in their line of descent with this type.
The subsequent theorem explores how the numbers in $F^{(n)}_{\sigma}$
combine with the
growth of numbers within the class.\vspace*{-2pt}
%
%
\begin{propn}\label{inductthm-a1}
Consider a sequential process.
Let $\upsilon\in\pen$ and
$\tau\in\last$ be types for which
$m_{\upsilon\tau}> 0$ and let $\nu\in\first$. If there is an
$r$-function $r$ such that for all $a < \Gamma(r)$
\[
\liminf\frac{1}{n} \log
\bigl(Z^{(n)}_{\upsilon}[na,\infty)
\bigr) \geq-r(a) \qquad\mbox{a.s.-}\tP_{ \nu},
\]
then
%
%
\begin{equation}\label{firstdriver}
\liminf_n \frac{1}{n} \log
\bigl(F^{(n)}_{\sigma}[na,\infty)
\bigr) \geq-r(a) \qquad\mbox{a.s.-}\tP_{ \nu}
\end{equation}
for all
$a \neq\Gamma(r)$ and $\sigma\in\last$.\vspace*{-2pt}
\end{propn}
%
%
\begin{theorem}\label{inductthm}
Consider any process with
final class $\last$ having \PF eigen\-value $\kappa$ and initial type
$\nu\notin\last$.
Suppose that for the $r$-function $r$ and any $\sigma\in\last$,
(\ref{firstdriver}) holds
for all
$a < \Gamma(r)$.
Then
\[
\liminf_n \frac{1}{n} \log
\bigl(Z^{(n)}_{\sigma}[na,\infty)
\bigr) \geq- {\mathfrak C}[r,\kappa^{\ast}]^{\circ}(a)
\qquad\mbox{a.s.-}\tP
_{ \nu}
\]
for all
$a < \Gamma({\mathfrak C}[r,\kappa^{\ast}])$.\vspace*{-2pt}
\end{theorem}

Before starting the main proofs, three lemmas are proved. The second of these
identifies a characterization of ${\mathfrak C}[r,\kappa^{\ast}]$
that arises
in proving
Theorem~\ref{inductthm}.\vspace*{-2pt}
%
%
\begin{lem} \label{s-igood} Suppose $f$ is $k$-convex,
$r$ is an $r$-function and ${\mathfrak M}[r^{\ast},f](\phi)<\infty$
for some
$\phi
>0$. Then ${\mathfrak C}[r,f^{\ast}]^{\circ}$ is also an $r$-function.\vadjust{\goodbreak}
\end{lem}
\begin{pf}
By Lemma~\ref{covex-d}, $f^{\ast}$ is proper, closed, convex and increasing.
Clearly ${\mathfrak C}[r,f^{\ast}]^{\circ}$
is convex. It is increasing, because both $r$ and
$f^{\ast}$ are, and negative somewhere,
because $r$ is. Since ${\mathfrak C}[r,f^{\ast}]$ is continuous from
the left
(by definition) the same must be true of ${\mathfrak C}[r,f^{\ast
}]^{\circ}$.
Finally, using both parts of Lemma~\ref{kh}, $({\mathfrak M}[r^{\ast
},f])^{\ast}={\mathfrak C}[r,f^{\ast}]$, and now Lemma \ref
{covex-d}\ref{covex-di} implies that
$({\mathfrak M}[r^{\ast},f])^{\ast}$ is not identically $-\infty$.
\end{pf}
%
%
\begin{lem}\label{concmaj}
$\!\!\!$Under the same conditions as Lemma~\ref{s-igood}, for $a<\Gamma
({\mathfrak C}[r,f^{\ast}])$,
\[
{\mathfrak C}[r,f^{\ast}](a)=\inf\{\lambda
r(b)+(1-\lambda)f^{\ast}(c)\dvtx
(\lambda
,b,c) \in A_a, r(b)<0\},
\]
where
$A_a=\{(\lambda,b,c)\dvtx\lambda\in[0,1], \lambda b+ (1-\lambda) c =a,
\lambda r(b)+(1-\lambda)f^{\ast}(c)<0\}$.
\end{lem}
\begin{pf}
Let ${\mathfrak c}[f,g]$ be the convex minorant of $f$ and $g$, so that
${\mathfrak C}[f,g]$
is the closure of ${\mathfrak c}[f,g]$. Since ${\mathfrak C}[r,f^{\ast}]$
is increasing and convex, it is continuous and strictly negative on
$(-\infty,\Gamma({\mathfrak C}[r,f^{\ast}]))$
and so on that set
${\mathfrak C}[r,f^{\ast}](a)={\mathfrak c}[r,f^{\ast}](a)$.
Furthermore, using Rockafellar
[(\citeyear{MR0274683}), Theorem 5.6],
\[
{\mathfrak c}[r,f^{\ast}](a)=\inf\{\lambda
r(b)+(1-\lambda)f^{\ast}(c)\dvtx\lambda
\in[0,1],
\lambda b+ (1-\lambda) c =a\},
\]
which equals
$\inf\{\lambda r(b)+(1-\lambda)f^{\ast}(c)\dvtx(\lambda,b,c) \in A_a\}$
when ${\mathfrak c}[r,f^{\ast}](a)<0$.
It remains to show that the additional constraint $r(b)<0$ makes no
difference, by showing that excluded values of the function can be
approximated closely by included ones.
The only possibility excluded is
$b=\Gamma(r)$, since
$r$ is infinity when strictly positive. The corresponding values of the
function being minimized can be approximated arbitrarily well when
$\lambda<1$ by taking $b \uparrow\Gamma(r)$ keeping $c$ fixed and
adjusting $\lambda$.
To deal with the $\lambda=1$ case, where $a=b=\Gamma(r)$,
note first that if $f^{\ast}(\tilde{a})=\infty$
for all $\tilde{a}>\Gamma(r)$, then, because
$r(\tilde{a})=\infty$ for all $\tilde{a}>\Gamma(r)$
also,
the same will be true of the convex minorant of
$r$ and $f^{\ast}$. Then
$a=\Gamma(r)=\Gamma({\mathfrak C}[r,f^{\ast}])$,
contradicting $a<\Gamma({\mathfrak C}[r,\kappa^{\ast}])$.
Hence,
there must be a $c>a$ with $f^{\ast}(c)<\infty$.
Then
\[
(1-\varepsilon) r\biggl(\frac{a-\varepsilon c}{1-\varepsilon}
\biggr)+\varepsilon f^{\ast}(c)
\]
provides a suitable approximation as $\varepsilon\downarrow0$.
\end{pf}
%
%
\begin{lem}\label{bin-ld2}
Let $Y_n$ be Binomial on $N_n $ trials with success
probability $p_n$ and
$\sum_n (N_n p_n)^{-1}(1-p_n)<\infty$.
Then
$
\log(Y_n)-\log(N_n p_n) \rightarrow0$ as
$n \rightarrow\infty$
almost surely.
\end{lem}
\begin{pf}
$\!\!\!$Chebyshev's inequality gives
that $P(|Y_n-EY_n|\geq\varepsilon EY_n )$ is bound\-ed above by
$(\varepsilon^2 N_n p_n)^{-1}(1-p_n)$,
and so Borel--Cantelli gives that
$Y_n/\break(N_np_n) \rightarrow1$.
\end{pf}
\begin{pf*}{Proof of Proposition~\ref{inductthm-a1}} Since
$r(a)=\infty$ for $a>\Gamma(r)$, the result holds in these cases.
Assume now that $a<\Gamma(r)$. The result is proved first for
$\sigma=\tau$. For some $T$ there is a probability $p>0$ that
a
particle of type $\upsilon$ has a child\vadjust{\goodbreak} of type $\tau$ to the right of
$T$, because $m_{\upsilon\tau}>0$. Then, given $\cF{n}$,
$\ntF^{(n+1)}[nb-T,\infty)$ is bounded below by a Binomial variable,
$Y_{n}$, on $Z^{(n)}_{\upsilon}[nb,\infty)$ trials with success
probability~$p$. Take $b \in(a,\Gamma(r))$ with $r(b)<0$. Then, by
Lemma~\ref{bin-ld2}, for $\varepsilon>0$ and then large enough $n$
\[
\log\bigl(\ntF^{(n+1)}[nb-T,\infty) \bigr)\geq\log(Y_{n}) \geq
\log\bigl(pZ^{(n)}_{\upsilon}[nb,\infty)\bigr) -\varepsilon.
\]
Hence
\[
\liminf\frac{1}{n}\log\bigl(\ntF^{(n+1)}[nb-T,\infty)\bigr)
\geq-r(b)
\]
and so
\[
\liminf\frac{1}{n}\log\ntF^{(n)}[na,\infty)\geq-r(b) \uparrow-r(a)
\]
as $ b \downarrow a$, giving~(\ref{firstdriver}) for $a<\Gamma(r)$
when $\sigma=\tau$.

Suppose now that $\sigma\neq\tau$. Find a sequence of distinct types
$ \tau=\sigma(0)\neq\sigma(1)\neq\cdots\neq\sigma
(c)=\sigma$ such
that each type can have children of the type
following it in the sequence. For some $T$,
there is a probability $p>0$ that a particle of type $\tau$ has a descendant
$c$ generations later to the
right of $T$ and of type $\sigma$. Let
$\tF^{(n+c)}$ be the point process
of all those in $F^{(n+c)}_{\sigma}$
with ancestors of type $\tau$ in generation $n$.
Then, given $\cF{n}$,
$\tF^{(n+c)}[nb-T,\infty)$ is bounded below by a Binomial variable, $Y_{n}$,
on $F^{(n)}_{\tau}[nb,\infty)$ trials with success probability $p$. Thus
\[
\liminf\frac{1}{n}\log\tF^{(n)}[na,\infty)\geq-r(a),
\]
when $r(a)<0$. Clearly $F^{(n)}_{\sigma}[x,\infty)\geq\tF
^{(n)}[x,\infty)$, giving the result.
\end{pf*}
\begin{pf*}{Proof of Theorem~\ref{inductthm}}
Let $d$ be the period of $\last$. Take $b< \Gamma(r)$ with $r(b)<0$,
$c< \Gamma(\kappa^{\ast})$ with $\kappa^{\ast}(c)<0$, $\varepsilon>0$
and $\lambda\in[0,1]$. For each positive integer~$t$, let $n=n(t)$ and
$\tilde{n}=\tilde{n}(t)$ be chosen to be increasing in $t$ with
$t=n+\tilde{n}d$ and with $n/t \rightarrow\lambda$ as $n
\rightarrow\infty$. Let $N_t=F^{(n)}_{\sigma}[nb,\infty)$. Then, using
the assumption that~(\ref{firstdriver}) holds, provided $n=n(t)
\rightarrow\infty$,
\begin{eqnarray*}
\liminf_t \frac{1}{t}\log N_t
&=&
\liminf_t \frac{1}{t}\log
\bigl( F^{(n)}_\sigma[nb,\infty)\bigr) \\
&=& \lambda\liminf_n \frac{1}{n}\log
\bigl( F^{(n)}_\sigma[nb,\infty)\bigr)\\
&\geq& -\lambda r(b).
\end{eqnarray*}
Given $\cF{n}$, $Z^{(t)}_{\sigma}[nb+\tilde{n}dc,\infty)$
is bounded below by $N_t$
independent copies (under~$\tP_{ \sigma}$) of $Z^{(\tilde
{n}d)}_{\sigma
}[\tilde{n}dc,\infty)$.
Propositions~\ref{behaviour1},~\ref{supercrit} and~\ref{irredcase}
imply that most of these copies should
have size near $\exp(- \tilde{n}d\kappa^{\ast}(c))$.
Let
$Y_t$ be the number that are not too far below their expectation, that
is, the number with
\[
\log\bigl(Z^{(\tilde{n}d)}_{\sigma}[\tilde{n}dc,\infty)\bigr)
\geq\tilde{n}d\bigl(-\kappa^{\ast}(c)-\varepsilon\bigr).
\]
Then, given $\cF{n}$, $Y_t$ is a Binomial variable with $N_t$ trials
and success probability~$p_{t}$,
where
\[
p_{t}=
\tP_{ \sigma} \bigl(
\log\bigl(Z^{(\tilde{n}d)}_{\sigma}[\tilde{n}dc,\infty)\bigr)
\geq\tilde{n}d\bigl(-\kappa^{\ast}(c)-\varepsilon\bigr)
\bigr).
\]
Propositions~\ref{supercrit} and~\ref{irredcase}
imply that $p_{t} \rightarrow1$ provided
$\tilde{n}(t) \rightarrow\infty$.
Now
\[
\log\bigl(Z^{(t)}_{\sigma}[nb+\tilde{n}dc,\infty)\bigr) \geq
\log
Y_t+\tilde{n}d\bigl(-\kappa^{\ast}(c)-\varepsilon\bigr)
\]
and, using Lemma~\ref{bin-ld2},
$Y_t/N_t \rightarrow1$ almost surely when $\sum_t (1/N_t)<\infty$. Let
$T(j)=\max\{t\dvtx n(t)=j\}$.
For suitable small $\delta$ and then all sufficiently large~$n$
\[
\log N_t = \log\bigl( F^{(n)}_\sigma[nb,\infty)\bigr) \geq n
\bigl(-r(b)-\delta\bigr)>0.
\]
Then,
\[
\sum_t \frac{1}{N_t}
\leq C\sum_j \frac{T(j)}{\exp(j (-r(b)-\delta))}
\]
and this is finite
provided $T$ does not grow exponentially quickly, for which it suffices that
$n(t)^\gamma\geq t$ for some $\gamma>1$.
Putting this together, provided $\tilde{n}(t) \rightarrow\infty$ and
$n(t)^\gamma\geq t$,
which can both be arranged,
%
%
\begin{equation}\label{subseq}\quad
\liminf_t \frac{1}{t}\log\bigl(Z^{(t)}_{\sigma}[nb+\tilde
{n}dc,\infty)
\bigr)\geq
\lambda(-r(b))+(1-\lambda)\bigl(-\kappa^{\ast}(c)-\varepsilon\bigr).
\end{equation}
Note too that
\[
\frac{nb+\tilde{n}dc}{t}= \biggl( \frac{n}{t}b+ \frac{\tilde{n}d}{t}c
\biggr) \rightarrow
\lambda b+(1-\lambda)c
\]
so that~(\ref{subseq}) implies, using continuity of $r$ at $b$ and
$\kappa^{\ast}$ at $c$,
%
%
\begin{equation}\label{lowerbd}\qquad
\liminf_t\frac{1}{t}\log
\bigl(Z^{(t)}_{\sigma}\bigl(t \bigl[\lambda b+(1-\lambda)c\bigr),\infty\bigr) \bigr)
\geq-\bigl(
\lambda r(b)
+
(1-\lambda)\kappa^{\ast}(c)\bigr).
\end{equation}

Consider instead the case where $\kappa^{\ast}(c) \geq0$, but still with
$t=n(t)+\tilde{n}(t)d$. Let
$p_{t}=
\tP_{ \sigma}(\rB{\sigma}{\tilde{n}d} \geq\tilde
{n}dc)$.
Now,
given $\cF{n}$, $Z^{(t)}_{\sigma}[nb+\tilde{n}dc,\infty)$
is bounded below by a Binomial variable, $Y_{t}$, on
$N_t=F^{(n)}_{\sigma}[nb,\infty)$
trials with success probability
$p_{t}$.
Much as previously,
provided $n(t) \rightarrow\infty$,
$\tilde{n}(t) \rightarrow\infty$ and
$n(t)/t \rightarrow\lambda$,
as $t \rightarrow\infty$,
Propositions~\ref{subcritlower} and~\ref{irredcase} give
\[
\liminf\frac{1}{t}(\log N_t + \log p_{t} )\geq-\bigl( \lambda r(b) +
(1-\lambda)\kappa^{\ast}(c)\bigr).
\]
Therefore, using Lemma~\ref{bin-ld2},
when $
\lambda r(b)
+
(1-\lambda)\kappa^{\ast}(c)<0$,
\begin{eqnarray*}
\liminf\frac{1}{t} \log
\bigl(Z^{(t)}_{\sigma}[nb+\tilde{n}dc,\infty)
\bigr)
&\geq&
\liminf\frac{1}{t} \log Y_t
\\
&\geq&
-\bigl(\lambda r(b)
+
(1-\lambda)\kappa^{\ast}(c)\bigr)
\end{eqnarray*}
and so, using continuity of $r$ at $b$,
(\ref{lowerbd}) holds in this case, too.\vadjust{\goodbreak}

Hence~(\ref{lowerbd}) holds for any $\lambda\in[0,1]$, any $b$ such that
$r(b)<0$ and any~$c$ with
$\lambda r(b)
+
(1-\lambda)\kappa^{\ast}(c)<0$. Fix $a$. Maximize the
right of~(\ref{lowerbd}),
using Lemma~\ref{concmaj}, over $(\lambda,b,c) \in A_a$ with $r(b)<0$
to get
\[
\liminf_t \frac{1}{t}\log\bigl(Z^{(t)}_{\sigma}[ta,\infty)
\bigr)\geq
{\mathfrak C}[r,\kappa^{\ast}](a).
\]
Now use that $Z^{(t)}_{\sigma}[ta,\infty)$ is integer-valued to replace
${\mathfrak C}[r,\kappa^{\ast}]$ by ${\mathfrak C}[r,\kappa^{\ast
}]^{\circ}$.
\end{pf*}
\begin{pf*}{Proof of Theorem~\ref{prelimmaintheorem}}
The result holds for $K=1$, by Corollary~\ref{K=1}. Suppose the result
holds for $K - 1$. By Lemmas~\ref{analytic} and~\ref{s-igood}, $r_K$
has the right properties. Then, by Proposition~\ref{inductthm-a1} and
then Theorem~\ref{inductthm},~(\ref{keyresultlb}) holds.
\end{pf*}

\section{Properties of $\nat{f}$ and the recursion}\label{furtherconv}

The main objectives of this section are to prove Proposition
\ref{f-natural2} giving properties of $\nat{f}$ and to establish
Proposition~\ref{alternativerec} giving the alternative recursion for
$r_i$.

Recall that $\nat{f}$ is the maximal convex function that has $\nat
{f}(\theta)/\theta$
monotone decreasing in $\theta\in(0,\infty)$
such that $\nat{f}\leq f$, and that $\varthetapp{f}$
is given by~(\ref{defnvartheta}).
The next result describes the structure of $\nat{f}$
and shows $\varthetapp{f}$
is closely connected to $\Gamma(f^{\ast})$.
It is worth mentioning that, although this proposition admits other
possibilities,
in the main results here $\cl{f}(\vartheta)$ and $f(\vartheta)$ will
only be different in cases where $f(\vartheta)$ is also infinite.
The formula
$\Gamma(f^{\ast})=\inf\{f(\theta)/\theta\dvtx\theta>0\}$
included in the proposition is the one used for the speed in the
irreducible blocks by
\citet{MR2322849} in their model.
%
%
\begin{propn}\label{f-natural2}\label{sw-nat}
Suppose $f$ is $k$-convex. Let $\Gamma= \Gamma(f^{\ast})$,
$\vartheta
=\varthetapp{f}$ and $\psiu=\inf\PhiD( f)$.
Then $\nat{f}\equiv-\infty$ and $\vartheta=-\infty$ when $\Gamma
=-\infty$. Otherwise, $\vartheta\geq0$ and $\nat{f}(\theta
)=f(\theta
)\mbox{ for } 0 \leq\theta< \vartheta$ (by definition). When $0
\leq
\vartheta<\infty$,
\[
\nat{f}(\theta)=\theta\Gamma<f(\theta)\qquad\mbox{for } \theta>
\vartheta
\]
and
\[
\nat{f}(\vartheta)=\cases{
f(\vartheta)\geq\cl{f}(\vartheta)=\vartheta\Gamma,
&\quad when $\vartheta=\psiu$,\vspace*{1pt}\cr
\vartheta\Gamma=\cl{f}(\vartheta)\leq f(\vartheta), &\quad when
$\vartheta>\psiu$.}
\]
In all cases,
%
%
\begin{equation}\label{w-speed}
\Gamma=\inf_{\theta>0} \frac{\nat{f}(\theta)}{\theta}=\inf
_{\theta>0}
\frac{f(\theta)}{\theta}.
\end{equation}
When $0 \leq\vartheta<\infty$,
$
\Gamma=f(\varthetapps{f})/\varthetapps{f}
$
provided $f$ is lower semi-continuous at $\varthetapps{f}$ and, when
$\varthetapps{f}=\infty$, $
\Gamma=\lim_{\theta\uparrow\infty} f(\theta)/\theta
$.
\end{propn}

Recall that $f^{\acci}$ is defined to be $(f^{\ast})^{\circ}$. Let
\[
f^{\flat}=(f^{\acci})^{\ast}=((f^{\ast})^{\circ})^{\ast}
\]
and
\[
\vartheta^{\flat}(f)=\inf\{\theta\dvtx f^{\flat}(\theta)<\cl
{f}(\theta)\},\vadjust{\goodbreak}
\]
which is $+\infty$ when this set is empty. Let $\psiu=\inf\PhiD( f)$.
The next lemma, which will be proved later in the section, says that
$\nat{f}$ and $f^{\flat}$ can only be different at $\psiu$ where the
former is $f(\psiu)$ and the latter is $\cl{f}(\psiu)$. This motivates
deriving properties of $f^{\flat}$.
%
%
\begin{lem}\label{nat=flt}
Let $f$ be $k$-convex. Then $\varthetapp{f}=\vartheta^{\flat}(f)$. When
$\varthetapp{f}= -\infty$,
$\nat{f}=f^{\flat}\equiv-\infty$. When $\varthetapp{f}\geq0$,
$\nat
{f}(\theta)=f^{\flat}(\theta)$ for $\theta>\psiu$, and
$
\nat{f}(\psiu)=f(\psiu)\geq\cl{f}(\psiu)=f^{\flat}(\psiu)
$.
\end{lem}

The next result establishes some properties of
$f^{\flat}$. In particular, the second part shows that it is a candidate
for $\nat{f}$, in that it has the right properties.
Building on these properties, the result following this one
characterizes $f^{\flat}$.
%
%
\begin{lem}\label{f-natural1} Let $f$ be $k$-convex and $\Gamma=
\Gamma(f^{\ast})$.
{\renewcommand\thelonglist{(\roman{longlist})}
\renewcommand\labellonglist{\thelonglist}
\begin{longlist}
\item\label{f-natural1-i}
$f^{\flat}(\theta)=\sup_{a \leq\Gamma} \{ \theta a - f^{\ast}(a)\}
$ when
$\Gamma>-\infty$, and $f^{\flat}\equiv-\infty$ when $\Gamma
=-\infty$;
\item\label{f-natural1-ii} $f^{\flat}\leq f$ and $f^{\flat}(\theta
)/\theta$
is decreasing as $\theta$ increases, so $f^{\flat} \leq\nat{f}$;
\item\label{f-natural1-iii} When $\theta'\geq\theta$, $f^{\flat
}(\theta
')\leq f^{\flat}(\theta)+(\theta'-\theta)\Gamma$.
\end{longlist}}
\end{lem}
\begin{pf}
Since $f^{\ast}(a)>0$ for $a>\Gamma$ and these are swept to infinity
in $f^{\acci}$, applying the definitions gives~\ref{f-natural1-i}.
Now
\[
f^{\flat}(\theta)=\sup_{a \leq\Gamma}
\{\theta a - f^{\ast}(a)\}\leq
\sup_a \{\theta a - f^{\ast}(a)\}=
\cl{f}(\theta)\leq f(\theta)
\]
using Lemma~\ref{kh} for the second equality.
Also,
\[
\frac{f^{\flat}(\theta)}{\theta}= \sup_{a \leq\Gamma} \biggl\{ a -
\frac
{f^{\ast}(a)}{\theta}
\biggr\}
\]
and $f^{\ast}(a)\leq0$ for these $a$,
so this decreases as $\theta$ increases. This proves~\ref{f-natural1-ii}.
Maximizing
$\theta' a - f^{\ast}(a)=\theta a - f^{\ast}(a)+(\theta'-\theta)a$
over $a \leq\Gamma$ completes the proof.
\end{pf}

At this point an additional convexity idea is needed.
The subdifferential at $\phi$ of a convex $f$,
$\subdiff f (\phi)$, is defined as the set of slopes
of possible tangents to $f$ at $\phi$. More formally,
\[
\subdiff f (\phi)=\{a\dvtx f(\theta)\geq f(\phi)+ a (\theta-\phi
)\mbox{ }\forall
\theta\}.
\]
The set is empty when $f$ is infinite at $\phi$ or has a one-sided derivative
at $\phi$ that is infinite in modulus,
it contains a single value at points where $f$ is differentiable,
and it is a
nondegenerate closed interval in all other cases; see
\citet{MR0274683}, Theorems 23.3, 23.4.
In the last case the infimum of $\subdiff f(\phi) $ is the left point of
this interval and is the derivative of $f$ from the left there.
%
%
\begin{lem}\label{subdiffeasy} Suppose $f$ is proper and convex.
{\renewcommand\thelonglist{(\roman{longlist})}
\renewcommand\labellonglist{\thelonglist}
\begin{longlist}
\item\label{subdiffeasyi}If $f$ is finite in a neighborhood of
$\phi$,
then $\subdiff f(\phi)=\subdiff\cl{f}(\phi)$ and is certainly nonempty.
\item\label{subdiffeasyii}
The following are equivalent:
$\gamma\in\subdiff f(\phi)$; $\phi\gamma-f(\phi)=f^{\ast}(\gamma)\hspace*{-0.9pt}$
$(\mbox{$=$}\sup\{\theta\gamma-f(\theta)\dvtx\theta\})$.
\item\label{subdiffeasyiii}If $f(\phi)=\cl{f}(\phi)$, the statements
in~\ref{subdiffeasyii} are also equivalent to
$\phi\in\subdiff f^{\ast}(\gamma)$ and to
$\phi\gamma-f^{\ast}(\gamma)=\sup\{a \phi-f^{\ast}(a)\dvtx a\}$ $(\mbox{$=$}
f(\phi
))$.
\end{longlist}}
\end{lem}
\begin{pf}The assertion that $\subdiff f(\phi)$ is nonempty is in
\citet{MR0274683}, Theorem 23.4.
The equivalences
are some of the results in \citet{MR0274683}, Theorem~23.5.
\end{pf}
%
%
\begin{lem} \label{rateagreenew}
Let $h$ be $k$-convex with $h(\phi)<\infty$.
Suppose $g$ is convex, $g \geq h$, $g(\phi)=h(\phi)$
and $\gamma\in\subdiff h (\phi)$.
Then:
{\renewcommand\thelonglist{(\roman{longlist})}
\renewcommand\labellonglist{\thelonglist}
\begin{longlist}
\item\label{rateagreenewi}$\gamma\in\subdiff g (\phi)$ and
$g^{\ast}(\gamma)=h^{\ast}(\gamma)$;
\item\label{rateagreenewii}if $h(\theta)=g(\theta)$ for all
$\theta
\leq\phi$, then
$g^{\ast}(a)=h^{\ast}(a)$ for all $a \leq\gamma$;
\item\label{rateagreenewiii}if, in addition, $g(\theta)=\infty$ for
$\theta>\phi$, then $g^{\ast}(a)=h^{\ast}(\gamma)-\phi(\gamma
-a)=\phi a -
h(\phi)$ for $a>\gamma$.
\end{longlist}}
\end{lem}
\begin{pf} Since $g(\phi)=h(\phi)$ and $g \geq h$,
\begin{eqnarray*}
\subdiff h (\phi)
&=&
\{a\dvtx h(\theta)\geq h(\phi)+ a (\phi-\theta)\mbox{ }\forall\theta\}\\
&\subset&
\{a\dvtx g(\theta)\geq g(\phi)+ a (\phi-\theta)\mbox{
}\forall\theta\}\\
&=& \subdiff g (\phi).
\end{eqnarray*}
Thus
$\gamma\in\subdiff h(\phi)$
implies $\gamma\in\subdiff g(\phi)$, and
then Lemma~\ref{subdiffeasy}\ref{subdiffeasyii} gives
\[
h^{\ast}(\gamma)=\sup_\theta\{\theta\gamma-h(\theta) \}= \phi
\gamma
-h(\phi)=\phi\gamma-g(\phi)
=\sup_\theta\{\theta\gamma-g(\theta) \}=g^{\ast}(\gamma).
\]
This proves~\ref{rateagreenewi}. For any $\theta$
\begin{eqnarray*}
\theta a-h(\theta) &=& \theta\gamma-h(\theta)
-\theta(\gamma-a) \\
&\leq&
\phi\gamma-h(\phi) -\theta(\gamma-a)
\\
&=& \phi a-h(\phi) -
(\theta-\phi) (\gamma-a),
\end{eqnarray*}
and so, when $(\theta-\phi) (\gamma-a)\geq0$,
$
\theta a-h(\theta)
\leq\phi a-h(\phi) $.
Hence,
for $a \leq\gamma$
\[
h^{\ast}(a)= \sup_\theta\{\theta a - h(\theta) \}=
\sup_{\theta\leq\phi} \{\theta a - h(\theta) \}
\]
and this holds also for $g$, giving~\ref{rateagreenewii}. Also, for
$a > \gamma$,
\[
\sup_{\theta\leq\phi} \{\theta a - h(\theta)\}
=\phi a-h(\phi)
=\phi\gamma-h(\phi)-\phi(\gamma-a)=h^{\ast}(\gamma)-\phi(\gamma-a)
\]
and when $g(\theta)=\infty$ for $\theta>\phi$ the first expression here
is $g^{\ast}(a)$.
\end{pf}
%
%
\begin{lem}\label{f-natural} \label{f-flt}
Let $f$ be $k$-convex, $\Gamma= \Gamma(f^{\ast})$, and
$\varthetapps
{f}=\vartheta^{\flat}(f)$.
{\renewcommand\thelonglist{(\roman{longlist})}
\renewcommand\labellonglist{\thelonglist}
\begin{longlist}
\item\label{f-flti}If $\Gamma>-\infty$ and $\subdiff f^{\ast}(
\Gamma
)= \varnothing$ or $f^{\ast}(\Gamma)<0$,
then $f^{\flat}=\cl{f}$ and $\varthetapps{f}=\infty$.
\item\label{f-fltii}
If $\Gamma>-\infty$ and $\subdiff f^{\ast}(\Gamma)\neq\varnothing
$, then
for any $\phi\in\subdiff f^{\ast}(\Gamma)$
\[
f^{\flat}(\theta)= \cases{
\cl{f}(\theta),&\quad$\theta\leq\phi$,\cr
\theta\Gamma- f^{\ast}(\Gamma), &\quad$\theta\geq\phi$.}
\]
\item\label{f-fltiii}
$f^{\flat}(\theta)=\cl{f}(\theta)$ if and only if
$\theta\leq\varthetapps{f}$.
\end{longlist}}
\end{lem}
\begin{pf}
Assume $\subdiff f^{\ast}(\Gamma)= \varnothing$. Then
$f^{\ast}(a)=\infty$ for $a > \Gamma$,
using \citet{MR0274683}, Theorem 23.4.
Also, if
$f^{\ast}(\Gamma)<0$, then, since $f^{\ast}$ is continuous when finite,
$f^{\ast}(a)=\infty$ for $a>\Gamma$. Hence, in both cases,
\[
f^{\flat}(\theta)=\sup_{a \leq\Gamma} \{ \theta a - f^{\ast}(a)\}
=\sup_{a}
\{ \theta a - f^{\ast}(a)\}
=\cl{f}(\theta),
\]
and
so $\vartheta^{\flat}(f)=\inf\{\theta\dvtx f^{\flat}(\theta)<\cl
{f}(\theta
)\}
=\infty$.
This give~\ref{f-flti}.
Now assume $\subdiff f^{\ast}(\Gamma)\neq\varnothing$.
For any $\phi\in\subdiff f^{\ast}( \Gamma)$, Lemma~\ref{rateagreenew}
(with $h=f^{\ast}$ and $g=f^{\acci}$) gives~\ref{f-fltii} because
$(f^{\ast})^{\ast}=\cl{f}$.

Turning to the final part, the result is immediate (and without real
content) when $\Gamma=-\infty$. It also holds when~\ref{f-flti} holds.
When~\ref{f-fltii} holds $\vartheta^{\flat}(f) \geq\sup\subdiff
f^{\ast}(\Gamma)$, but when $\cl{f}(\phi)=f^{\flat}(\phi)
=\phi\Gamma-f^{\ast}(\Gamma)$ Lemma
\ref{subdiffeasy}\ref{subdiffeasyii} gives $\phi\in \subdiff
f^{\ast}(\Gamma)$. Hence $\vartheta^{\flat}(f) = \sup\subdiff
f^{\ast}(\Gamma)$ and $f^{\flat}(\theta)< \cl{f}(\theta)$ for all
$\theta
>\vartheta^{\flat}(f)$.
\end{pf}
\begin{pf*}{Proof of Lemma~\ref{nat=flt}}
Let $\vartheta=\vartheta^{\flat}(f)$ and $\Gamma=\Gamma(f^{\ast})$.
When $\Gamma=-\infty$,
$f^{\ast}(a)>0 $ for all $a$, $f^{\flat}\equiv-\infty$ and
$\vartheta
=-\infty$. If $\nat{f}\not\equiv-\infty$, then, for some finite $A
\geq0$ and $B$,
$A+B\theta\leq\nat{f}(\theta) \leq f(\theta)$
and then $f^{\ast}(B)\leq-A \leq0$.
Hence when $\Gamma=-\infty$, $\nat{f}\equiv-\infty$ and
$\varthetapp
{f}=-\infty$.

Assume now that $\Gamma>-\infty$, so that $\nat{f}\not\equiv
-\infty$.
Then $\nat{f}(\psiu)=f(\psiu)$.
By Lem\-ma~\ref{f-natural1}\ref{f-natural1-ii}, $\nat{f} \geq f^{\flat}$
and using Lemma~\ref{f-natural}
$f^{\flat}(\psiu)=\cl{f}(\psiu)\leq f(\psiu)=\nat{f}(\psiu)$.
We need to show that
$\nat{f}$ and $f^{\flat}$ agree on $(\psiu, \infty)$.
When $\PhiD( f)=\{\psiu\}$
the result holds. Hence we may suppose $\PhiD( f)$ has a
nonempty interior.
Then
$f \geq\nat{f}
\geq f^{\flat} = \cl{f}=f$ on $(\psiu,\vartheta)$.
Thus the result holds when $\vartheta=\infty$, and so we can assume
$\vartheta<\infty$, and hence, by Lemma~\ref{f-natural}\ref{f-flti},
that $f^{\ast}(\Gamma)=0$.
Then, by Lemma~\ref{f-natural}\ref{f-natural1-ii},
$f^{\flat}(\theta)=
f(\theta)$ for
$\theta\in(\psiu, \vartheta)$ and $f^{\flat}(\theta)=\Gamma
\theta$
for $\theta\in[\vartheta, \infty)$.
Suppose that for some $\phi>\psiu$, $\nat{f}(\phi)>f^{\flat}(\phi)$.
Hence, $\phi\geq\vartheta$ and $\nat{f}(\phi)> \Gamma\phi$. Then
\[
\frac{\nat{f}(\phi)}{\phi}>\Gamma=\frac{f^{\flat}(\vartheta
)}{\vartheta}=
\frac{\cl{f}(\vartheta)}{\vartheta}=
\liminf_{\theta\rightarrow\vartheta}
\frac{{f}(\theta)}{\theta}\geq
\liminf_{\theta\rightarrow\vartheta}
\frac{\nat{f}(\theta)}{\theta},
\]
contradicting that
$\nat{f}(\theta)/\theta$ is decreasing and continuous at $\phi$.

It remains to prove $\varthetapp{f}=\vartheta$ in this case.
Lemma~\ref{f-natural}\ref{f-fltiii} gives
\[
\vartheta=\inf\{\theta\dvtx f^{\flat}(\theta)<\cl{f}(\theta)\}
=\sup\{\theta\dvtx f^{\flat}(\theta)=\cl{f}(\theta)\}
\]
and the relationship between $\nat{f}$ and $f^{\flat}$ already
established means
this equals $\sup\{\theta\dvtx\nat{f}(\theta)=f(\theta)\}$ which is
$\varthetapp{f}$.\vspace*{-2pt}
\end{pf*}
\begin{pf*}{Proof of Proposition~\ref{f-natural2}}
This uses Lemmas~\ref{nat=flt} and~\ref{f-natural}.
When $\Gamma=-\infty$, Lemma~\ref{nat=flt} contains the result.
When $\subdiff f^{\ast}(\Gamma)=\varnothing$
or $f^{\ast}(\Gamma)<0$ the characterization of $\nat{f}$
follows from Lemma~\ref{f-natural}\ref{f-flti}. In the
remaining cases $\vartheta=\varthetapp{f}<\infty$ and the
characterization follows from
Lemma~\ref{f-natural}\ref{f-fltii}. The assertion about $\Gamma$
follows from this characterization.\vspace*{-2pt}
\end{pf*}

The following lemma will be important in later sections. The
one after it records various facts needed to prove
the alternative recursion in Proposition~\ref{alternativerec}.\vspace*{-2pt}
%
%
\begin{lem}\label{f-fltiiii}
Let $f$ be $k$-convex and $a \in\subdiff\cl{f}(\theta)$.
{\renewcommand\thelonglist{(\roman{longlist})}
\renewcommand\labellonglist{\thelonglist}
\begin{longlist}
\item\label{f-fltiiiii} If $\theta> \varthetapp{f}$, then $ f^{\ast}(a)>0$.
\item\label{f-fltiiiiii} If $\theta< \varthetapp{f}$, then
$ f^{\ast}(a) \leq0$.\vspace*{-2pt}
\end{longlist}}
\end{lem}
\begin{pf}
By Lemma~\ref{nat=flt}, $\varthetapp{f}=\vartheta^{\flat}(f)$.
Lemma~\ref{subdiffeasy} gives
\[
\cl{f}(\theta)=\theta a - f^{\ast}(a)
=\sup_b \{\theta b - f^{\ast}(b)\}\geq\sup_{b \leq
\Gamma}
\{\theta b - f^{\ast}(b)\}=f^{\flat}(\theta).
\]
When $\theta> \varthetapp{f}$ there is strict inequality, implying
that $f^{\ast}(a)>0$.

If $0=\theta< \varthetapp{f}$, then $\Gamma(f^{\ast})>-\infty$
and so
$f^{\ast}(a)=-\cl{f}(0) <0$. Otherwise, take $\theta<\theta
+\varepsilon
<\varthetapp{f}$.
Note that $f^{\flat}(\theta)/\theta$ is decreasing on $(0,\infty)$ and
equals $\cl{f}(\theta)/\theta$ on $(0,\varthetapp{f})$,
and that $f^{\flat}(\theta)=\cl{f}(\theta)=\theta a - f^{\ast
}(a)$. Therefore
\[
\frac{\theta+ \varepsilon}{\theta}\bigl( \theta a - f^{\ast
}(a)
\bigr)=\frac
{\theta+ \varepsilon}{\theta}\cl{f}(\theta)\geq\cl{f}(\theta
+\varepsilon)
\geq(\theta+\varepsilon) a - f^{\ast}(a).
\]
Thus $-\varepsilon f^{\ast}(a)/\theta\geq0$.
\end{pf}
%
%
\begin{lem}\label{fnatsimple}
Suppose $f$ and $\kappa$ are
$k$-convex.
{\renewcommand\thelonglist{(\roman{longlist})}
\renewcommand\labellonglist{\thelonglist}
\begin{longlist}
\item\label{fnatsimplei}
$f^{\acci}=(\nat{f})^{\acci}=(\nat{f})^{\ast}$ and
$\cl{\nat{f}}=(f^{\acci})^{\ast}$;
\item\label{fnatsimpleii} $\PhiD( \nat{f})=\Phidp{f}$;
\item\label{fnatsimpleiii}\label{sweeplem} $\nat{{\mathfrak
M}[\nat{f},\nat{\kappa}]}=
{\mathfrak M}[\nat{f},\nat{\kappa}] \leq{\mathfrak M}[\nat
{f},\kappa] $.
\end{longlist}}
\end{lem}
\begin{pf}
The first part follows easily from Lemmas
\ref{firstconvlem} and~\ref{nat=flt}, because $f^{\flat}=(f^{\acci
})^{\ast}$,
and the second from Lemmas~\ref{nat=flt} and~\ref{f-natural1}\ref
{f-natural1-iii}.
For the final one, just note that ${\mathfrak M}[\nat{f},\nat{\kappa}]$
inherits all the right properties from $\nat{f}$ and $\nat{\kappa}$.\vspace*{-2pt}
\end{pf}
\begin{pf*}{Proof of Proposition~\ref{alternativerec}}
By definition~(\ref{firstrecur}),
$f^{\acci}_1=\kappa^{\acci}_1=r_1$.
Suppose the result is true for
$i-1$. By Lemmas~\ref{kh}(ii) and~\ref{fnatsimple}\ref{fnatsimplei}
\begin{eqnarray*}
(\nat{f}_i)^{\ast}&=&f^{\acci}_i=
{\mathfrak M}[\nat{f}_{i-1},\kappa_i]^{\acci}
=({\mathfrak M}[\nat{f}_{i-1},\kappa_i]^{\ast})^{\circ}
\\[-2pt]
&=&{\mathfrak C}[f^{\acci}_{i-1},\kappa^{\ast}_i]^{\circ}
={\mathfrak C}[r_{i-1},\kappa^{\ast}_i]^{\circ}=r_i
\end{eqnarray*}
as required.\vadjust{\goodbreak}
\end{pf*}
%
%
\begin{lem}\label{alternativerectwo}
Let $f_i$ be given by~(\ref{secondrecur}).
When~(\ref{phiiorder}) holds, $f_i$ is closed and $k$-convex,
$ [\phi_i, \infty) \subset\PhiD( \nat{f}_{i})= \bigcap
_{j \leq i} \Phidp
{\kappa_j},
$ $-\infty<r_i$ for each $i$, and if $f_1(0)>0$, then $f_i(0)>0$.
\end{lem}
\begin{pf}
Using Lemma~\ref{analytic}, $f_1=\kappa_1$ is $k$-convex,
and by Lemma~\ref{fnatsimple}\ref{fnatsimpleii}
$\PhiD( \nat{f}_1)= \Phidp{\kappa_1}$.
Hence the result is true for $i=1$. Suppose the result holds for $i-1$.
By definition,
\[
\PhiD( f_i)=
\PhiD( {\mathfrak M}[\nat{f}_{i-1},\kappa_i])=\PhiD
(
\nat{f}_{i-1})
\cap\PhiD( \kappa_i) \supset
[\phi_{i-1},\infty)\cap\PhiD( \kappa_i),
\]
which is nonempty, since it contains $\phi_i$ by~(\ref{phiiorder}).
Thus $f_i$ is $k$-convex and $\PhiD( \nat{f}_i)$ contains $[\phi
_i,\infty)$. Furthermore, $\nat{f}_{i-1}$ and $\kappa_i$ are closed, so
$f_i$ is, too. Since $\PhiD( f_i)$ is nonempty $ \Phidp{{f}_i} = \PhiD(
\nat{f}_{i-1}) \cap\Phidp{\kappa_i}, $ and then the induction
hypothesis and Lemma~\ref{fnatsimple}\ref {fnatsimpleii} confirm the
formula for $\PhiD( \nat{f}_{i})$. Now, by Lemma~\ref{covex-d}(i),
$-\infty<(\nat{f}_i)^{\ast} =f^{\acci}_i=r_i$. Since $f_{i-1}$ is
closed, ${f}_{i-1}(0)>0$ implies that $\nat {f}_{i-1}(0)={f}_{i-1}(0)$
and then $f_i(0)\geq\nat {f}_{i-1}(0)={f}_{i-1}(0)>0$.
\end{pf}

\section{Upper bounds on numbers}\label{upper}
Here, Theorem~\ref{newub} will be proved.
The first lemma presses the argument deployed at
the start of the proof of Proposition~\ref{oneclass} a little further.
It notes that~(\ref{induct12})
implies the apparently stronger~(\ref{induct13}).
The minor distinction between $\nat{f}$ and $f^{\flat}$ $(\mbox{$=$}(f^{\acci
})^{\ast})$,
exposed in Lemma~\ref{nat=flt}, matters in this result.
%
%
\begin{lem}\label{upperlemma}
Suppose that for a $k$-convex $f$ with $\Gamma(f^{\ast})> -\infty$
and a
point processes $P^{(n)}$
%
%
\begin{equation}\label{induct12}
\limsup_n \frac{1}{n} \log
\biggl(\int e^{\theta x}P^{(n)}(dx)
\biggr) \leq f(\theta) \qquad\mbox{a.s. }\forall\theta.
\end{equation}
Then
%
%
\begin{equation}\label{induct11}
\limsup_n \frac{1}{n} \log\bigl( P^{(n)}[na,\infty) \bigr)
\leq-f^{\acci}(a)\qquad\mbox{a.s. }\forall a
\end{equation}
and
%
%
\begin{equation}\label{induct13}
\limsup_n \frac{1}{n} \log
\biggl(\int e^{ \theta x}P^{(n)}(dx)
\biggr) \leq\nat{f}(\theta) \qquad\mbox{a.s. }\forall\theta.
\end{equation}
\end{lem}
\begin{pf}
For $\theta\geq0$,
\[
\theta na + \log P^{(n)}[na,\infty) \leq\log\int e^{ \theta x}P^{(n)}(dx)
\]
and so using~(\ref{induct12}),
minimizing over $\theta$, and using that $P^{(n)}[na,\infty)$
is eventually zero when it decays gives~(\ref{induct11}).
The assertions~(\ref{induct12}) and~(\ref{induct13})
are the same when\vadjust{\goodbreak} $\varthetapp{f}=\infty$.
Hence we may assume $\varthetapp{f}<\infty$. For $\varepsilon>0$ and
large enough $n$,
$P^{(n)}[n(\Gamma(f^{\ast})+\varepsilon),\infty)=0$. Then, for $
\theta
\geq
\psi$,
\[
\int e^{ \theta x}P^{(n)}(dx)
\leq e^{(\theta-\psi) (\Gamma(f^{\ast})+\varepsilon)n}\int e^{ \psi
x}P^{(n)}(dx)
\]
so that~(\ref{induct12}) gives
\[
\limsup\frac{1}{n} \log
\biggl(\int e^{ \theta x}P^{(n)}(dx)
\biggr) \leq f(\psi)+(\theta-\psi) \Gamma(f^{\ast}) \qquad\mbox{a.s.}
\]
Take $\psi=\theta$
when $\theta< \varthetapp{f}$ and when $\theta= \varthetapp{f}=\inf
\PhiD( f)$,
so in these cases the right-hand side is just $f(\theta)$.
Otherwise, take $\psi\in\PhiD( f)$ and
then let $\psi\rightarrow\varthetapp{f}$.
[If $f$ is lower semi-continuous at $\varthetapp{f}$, taking
$\psi=\varthetapp{f}$ will do.] Then the right-hand side becomes
$\cl{f}(\varthetapp{f})+(\theta-\varthetapp{f}) \Gamma(f^{\ast}) $.
Proposition~\ref{f-natural2}
confirms that the right-hand side is $\nat{f}$ in all cases.
\end{pf}

Recall that
$-\chi_i$ is the logarithm of the indicator function of the set $\PhiD
_{i-1,i}$.

%
\begin{lem}\label{Fgood-tran2}
In a sequential process with $m_{\upsilon\tau}> 0$ for $\upsilon\in
\pen$ and $\tau\in\last$,
suppose that for all $\nu\in\conC{1}$
and $\theta$
\[
\limsup\frac{1}{n} \log
\biggl(\int e^{ \theta x}Z^{(n)}_{\upsilon}(dx)
\biggr) \leq f (\theta)\qquad\mbox{a.s.-}\tP_{ \nu},
\]
where $f$ is $k$-convex with $\Gamma(f^{\ast})> -\infty$.
Let
$g=\nat{f}+ \chi_K$ and let $\kappa$ be the \PF eigenvalue of the final
block in
$m$, corresponding to $\last$.
Then, for \mbox{$\sigma\in\last$},
\[
\limsup\frac{1}{n} \log
\biggl(\int e^{ \theta x}Z^{(n)}_{\sigma}(dx)
\biggr) \leq\nat{ {\mathfrak M}[\nat{g},\kappa]} (\theta)
\qquad\mbox{a.s.-}\tP_{
\nu}
\]
and $\Gamma(\nat{ {\mathfrak M}[\nat{g},\kappa]})> -\infty$.
\end{lem}
\begin{pf} Note first that $\nat{f} \leq\nat{g} \leq\nat{
{\mathfrak M}[\nat{g},\kappa]}$, so that $\Gamma(f^{\ast})>
-\infty$ implies that
$\Gamma(g^{\ast})> -\infty$ and that $\Gamma(\nat{ {\mathfrak
M}[\nat{g},\kappa]})> -\infty$.

Recall that $F^{(n)}_{\tau}$ are those in the $n$th generation that are
the first of type $\tau$ in their line of descent.
Taking conditional expectations,
\[
\tE\biggl[
\int e^{ \theta x}F^{(n+1)}_{\tau}(dx)
\Big| \cF{n}\biggr]
=
\biggl(\int e^{\theta x}Z^{(n)}_{\upsilon}(dx)\biggr)
m_{\upsilon\tau}(\theta)
\]
and so, using Lemma~\ref{upperlemma} and the definition of $g$,
\[
\limsup\frac{1}{n} \log\tE\biggl[
\int e^{ \theta x}F^{(n+1)}_{\tau}(dx)
\Big| \cF{n}\biggr] \leq g(\theta) \qquad\mbox{a.s.-}\tP_{ \nu}.
\]
Then conditional Borel--Cantelli [e.g., \citet{MR496420}] gives that
\[
\limsup\frac{1}{n} \log
\biggl(\int e^{\theta x}F^{(n)}_{\tau}(dx)
\biggr) \leq g(\theta) \qquad\mbox{a.s.-}\tP_{ \nu}
\]
and a further application of Lemma~\ref{upperlemma} gives that
\[
\limsup\frac{1}{n} \log
\biggl(\int e^{ \theta x}F^{(n)}_{\tau}(dx)
\biggr) \leq\nat{g}(\theta) \qquad\mbox{a.s.-}\tP_{ \nu}.
\]

The set of particles obtained as those
first in their lines of descent that are either in $\last$ or in
generation $n$ forms an optional line, as in \citet{MR1014449}.
Let $\cG{n}$ contain all information on reproduction down lines of
descent to particles in this line.
In this sequential process the first in any line of descent
with a type in $\last$ is necessarily of type $\tau$.
For any $\sigma\in\last$ and $\theta$,
\[
\tE\biggl[
\int e^{ \theta x}Z^{(n)}_{\sigma}(dx)
\Big| \cG{n}\biggr]
=
\sum_{r=0}^{n}
\int e^{ \theta x}F^{(r)}_{\tau}(dx)
( m(\theta)^{n-r})_{\tau\sigma}.
\]
Hence, the bound just obtained, Lemma~\ref{matrix}, and routine
estimation give
\[
\limsup_n
\frac{1}{n}
\log
\tE\biggl[
\int e^{\theta x}Z^{(n)}_{\sigma}(dx)
\Big| \cG{n}\biggr]
\leq
{\mathfrak M}[\nat{g},\kappa](\theta)
\qquad\mbox{a.s.-}\tP_{ \nu}.
\]
Conditional Borel--Cantelli and Lemma~\ref{upperlemma} complete the
proof.
\end{pf}
%
%
\begin{lem}\label{niceg} Define $g_i$ by~(\ref{thirdrecur}). Then
$g_K$ is finite somewhere on $(0,\infty)$
if and only if~(\ref{phiiorder}) holds and~(\ref{newphiiorder})
holds for $i=1,2,\ldots,K - 1$. When these hold $g_K$ is $k$-convex,
\[
[\phi_K, \infty) \subset\PhiD( \nat{g}_K)=
\biggl(\bigcap_{j \leq K}
\Phidp{\kappa_j}\biggr)
\cap\biggl(\bigcap_{j \leq K - 1} \PhiD^+_{j,j+1} \biggr),
\]
$\nat{g}_K$ is continuous on $\PhiD( \nat{g}_K)$, and
$-g^{\ast}_K(a)< \infty$ for some finite $a$.
\end{lem}
\begin{pf}
Assume $g_K(\phi_K)$ is finite. Then
$\phi_K \in\PhiD( \kappa_K)$ and there is a
$\phi_{K - 1,K} \leq\phi_K$ such that $ (\nat{g}_{K - 1}+\chi
_{K})(\phi_{K - 1,K})<\infty$,
which implies that $\phi_{K - 1,K} \in\PhiD_{K - 1,K}$ and that
there is a
$\phi_{K - 1}\leq\phi_{K - 1,K}$ with $g_{K - 1}(\phi_{K -
1})$ finite. Hence, by induction on $K$,
$g_K(\phi)$ finite for some positive $\phi$ implies that (\ref
{phiiorder}) holds and~(\ref{newphiiorder}) holds for $i=1,2,\ldots
,K
- 1$.

Now suppose~(\ref{phiiorder}) holds and~(\ref{newphiiorder}) holds
for $i=1,2,\ldots,K - 1$.
All the assertions of the lemma then hold with $g_1=\kappa_1$ in place
of $g_K$.
Suppose all the assertions hold for $g_{K - 1}$.
Then
\[
\PhiD( \nat{g}_{K - 1}+\chi_{K - 1})
=\Phidp{g_{K - 1}}\cap\PhiD_{K - 1,K}
\supset[\phi_{K - 1}, \infty)\cap\PhiD_{K - 1,K}\ni\phi_{K
- 1,K}.
\]
Since this is nonempty,
\[
\PhiD( g_K)=\PhiD\bigl( {\mathfrak M}[\nat{(\nat
{g}_{K - 1}+\chi_{K - 1})},\kappa_K]\bigr)
=
\Phidp{g_{K - 1}}\cap\PhiD^+_{K - 1,K} \cap\PhiD(
\kappa_K)
\]
and $g_K$ is continuous there, because $\nat{g}_{K - 1}$ is by
assumption and
$\kappa_K$ is by Lem\-ma~\ref{analytic}.
Furthermore
$
\PhiD( g_K)\supset
[\phi_{K - 1}, \infty) \cap\PhiD( \kappa_K) \ni\phi_K
$ and so is nonempty.
Then, using Lemma~\ref{fnatsimple}\ref{fnatsimpleii},
\[
\PhiD( \nat{g}_K)=\Phidp{g_K} = \Phidp{g_{K - 1}}\cap
\PhiD^+_{K -
1,K} \cap\Phidp{\kappa_K}\supset[\phi_K,\infty),\vadjust{\goodbreak}
\]
and $\nat{g}$ is continuous there. Substituting for $\Phidp{g_{K - 1}}$
gives the formula for $\Phidp{g_K}$. Lemma~\ref{covex-d}\ref
{covex-di} gives the final part and the induction is complete.
\end{pf}
\begin{pf*}{Proof of Theorem~\ref{newub}}
Note first that the final assertion is contained in Lemma~\ref{niceg}.
Now, by Lemma~\ref{upperlemma}, it is enough to show that
\[
\limsup\frac{1}{n} \log
\biggl(\int e^{ \theta x}Z^{(n)}_{\sigma}(dx)
\biggr) \leq g_K(\theta)\qquad\mbox{a.s.-}\tP_{ \nu}
\]
and that $\Gamma(g_K^{\ast})>-\infty$.
Both hold when $K=1$, the first by Lemma~\ref{transbound}, the second
by combining Lemmas~\ref{covex-d}\ref{covex-dvi}, \ref
{analytic}\ref
{analyticiv} and
the assumption that $\kappa_1(0)>0$.
Assume the result holds for $K - 1$. Then it holds also for $K$, by
Lemma~\ref{Fgood-tran2}
with $f=g_{K - 1}$ and $\kappa=\kappa_K$.
\end{pf*}

\section{Matching the lower and upper bounds}\label{firstproof}

In this section Theorems~\ref{maintheorem} and~\ref{f=g}
will be proved, using Theorem~\ref{newub}. These are cases where the
upper bound on numbers matches the lower bound based on Theorem \ref
{prelimmaintheorem}. The simpler theorem will be discussed first.
\begin{pf*}{Proof of Theorem~\ref{maintheorem}}
Let $f_i$ and $g_i$ be as~(\ref{secondrecur}) and~(\ref{thirdrecur}).
Clearly $g_1=f_1=\kappa_1$. Assume $g_{i-1}=f_{i-1}$.
Note first that $\nat{(\nat{f}_{i-1} + \chi_i)}\geq\nat{f}_{i-1}$
and so
\[
g_i={\mathfrak M}[\nat{(\nat{g}_{i-1} + \chi_i)},\kappa_i] =
{\mathfrak M}[\nat{(\nat{f}_{i-1} + \chi_i)},\kappa_i] \geq
{\mathfrak M}[\nat{f}_{i-1},\kappa_i] = f_i.
\]
By Lemma~\ref{alternativerectwo},~(\ref{off-diag-cond})
is equivalent to
$\PhiD( \nat{f}_{i-1})\cap\PhiD( \kappa_{i})
\subset\PhiD_{i-1,
i}$ $(\mbox{$=$}\PhiD( \chi_i))$,
and when this holds
$
{\mathfrak M}[\nat{f}_{i-1} + \chi_i,\kappa_i]={\mathfrak M}[\nat
{f}_{i-1},\kappa_i]$.
Then,
\[
g_i ={\mathfrak M}[\nat{(\nat{f}_{i-1} + \chi_i)},\kappa_i]\leq
{\mathfrak M}[\nat{f}_{i-1} + \chi_i,\kappa_i] ={\mathfrak
M}[\nat{f}_{i-1},\kappa_i]=f_i.
\]
Hence
$g_i=f_i$. Thus, by induction, $g_K=f_K$. Then $g^{\acci}_K=f^{\acci}_K$,
which by Corollary~\ref{newmain} gives the result.
\end{pf*}

The proof just given relies on a simple estimation of
$\nat{(\nat{f}_{i-1} + \chi_i)}$ and then $\PhiD(
\kappa_{i})$ making
$\chi_i$ irrelevant.
To deal with more cases it is necessary to refine the estimation of
$\nat{(\nat{f}_{i-1} + \chi_i)}$
and make a more careful comparison of the result with~$\kappa_i$. This
is done next.
\label{evenmore}
%
%
\begin{lem}\label{fgagreelemma}Suppose $f$ and $\kappa$ are $k$-convex
with $\Gamma(f^{\ast})>-\infty$. Suppose~$C$ is a convex set, and let
$\chi(\theta)=-\log I(\theta\in C)$, $\psiu=\inf C$ and $\psio
=\sup
C$. Let $\chi_1(\theta)=-\log I(\theta\in C^+)$ and $\chi_2(\theta)
=-\log I(\theta\in(-\infty, \psio])$.
{\renewcommand\thelonglist{(\roman{longlist})}
\renewcommand\labellonglist{\thelonglist}
\begin{longlist}
\item\label{fgagreelemmanewi} $\Gamma({\mathfrak M}[\nat{(\nat{f}
+ \chi)},\kappa]^{\ast})>-\infty$.
\item\label{fgagreelemmai}
If $\PhiD( \nat{f})\cap C \neq\varnothing$ and $\nat{f}$ is
continuous from the right at $\psio$,
then
\[
\nat{(\nat{f} + \chi)}(\theta)=
\cases{
(\nat{f}+\chi)(\theta),&\quad$\theta< \psio$, \vspace*{2pt}\cr
\theta\bigl(\nat{f}(\psio)/\psio\bigr),&\quad$\theta\geq\psio$.}\vadjust{\goodbreak}
\]
\item\label{fgagreelemmaii}
If, in addition to the conditions in~\ref{fgagreelemmai},
%
%
\begin{equation}\label{eitheror}
\mbox{either }\kappa(\theta) \geq\theta\bigl(\nat{f}(\psio)/\psio\bigr)
\mbox{ for }
\theta
\in[\psio, \infty)
\quad\mbox{or}\quad
\varthetapp{f}\leq\psio,
\end{equation}
then
\[
{\mathfrak M}[\nat{(\nat{f} + \chi)},\kappa]={\mathfrak M}[\nat
{f}+\chi_1,\kappa].
\]
\item\label{fgagreelemmaiii}
If, in addition to the conditions in~\ref{fgagreelemmai},
$\PhiD( \nat{f})\cap\PhiD( \kappa) \subset
[\psiu, \infty)$,
then
\[
{\mathfrak M}[\nat{(\nat{f} + \chi)},\kappa]={\mathfrak M}[\nat
{(\nat{f}+\chi_2)},\kappa],
\]
except possibly at $\psiu$, and when they differ there the left-hand
side is infinite.
\item\label{fgagreelemmaiv}
When the conditions in both
\ref{fgagreelemmaii}
and~\ref{fgagreelemmaiii} hold, ${\mathfrak M}[\nat{(\nat{f} +
\chi)},\kappa]={\mathfrak M}[\nat{f},\kappa]$
except possibly at $\psiu$, and when they differ there the left-hand
side is infinite.
\end{longlist}}
\end{lem}
\begin{pf} The proof of part~\ref{fgagreelemmanewi} mimics the
first part of the proof of Lem\-ma~\ref{Fgood-tran2}.
The form of $\nat{(\nat{f} + \chi)}$ in~\ref{fgagreelemmai}
follows from Proposition~\ref{f-natural2}.
Now, assume~(\ref{eitheror}) holds. In the first case, $\nat{(\nat
{f}
+ \chi)}$
is dominated by
$\kappa$ in $[\psio,\infty)$ and equals $\nat{f}$ on~$C$. In the second,
since $\varthetapp{f}\leq\psio<\infty$ and $\nat{f}$ is continuous
from the right at $\psio$,
$\Gamma(f^{\ast})=\nat{f}(\psio)/\psio$ by Proposition \ref
{f-natural2}; and so
$\nat{(\nat{f} + \chi)}=\nat{f}$ on $C^+$,
and this also holds when $\psio=\infty$.
Hence in both cases
$
{\mathfrak M}[\nat{(\nat{f} + \chi)},\kappa]={\mathfrak M}[\nat
{f}+\chi_1,\kappa]
$, proving~\ref{fgagreelemmaii}.
By~\ref{fgagreelemmai},
$\nat{(\nat{f} + \chi)}$ and $\nat{(\nat{f} + \chi_2)}$
agree for $\theta\geq\psio$, and $\nat{(\nat{f} + \chi_2)}=\nat{f}$
for $\theta< \psio$.
Since
$\PhiD( {\mathfrak M}[\nat{f},\kappa])=\PhiD(
\nat
{f})\cap\PhiD( \kappa),
$
${\mathfrak M}[\nat{(\nat{f} + \chi)},\kappa]$ and ${\mathfrak
M}[\nat{f},\kappa]$
agree
(and are both infinite) on $(-\infty, \psiu)$ and by~\ref{fgagreelemmai}
they agree on $(\psiu,\psio)$. They also agree at
$\psiu$ when $\psiu\in C$ and when it is not $(\nat{f} + \chi)$
is infinite there. This proves~\ref{fgagreelemmaiii}.
The final part is an application of~\ref{fgagreelemmaiii} to
$f+\chi_1$.
\end{pf}
\begin{pf*}{Proof of Theorem~\ref{f=g}}
Note first that,
by Lemma~\ref{fnatsimple}\ref{fnatsimpleii},
$\Phidp{g_{K - 1}}=\PhiD( \nat
{g}_{K - 1})$.
Also, Lemmas~\ref{fnatsimple}\ref{fnatsimpleii} and \ref
{alternativerectwo} show that the left of~(\ref{ii}) is just
$\Phidp{f_i} \cap\PhiD( \kappa_{i+1}) $.

The proof is by induction. For it, add in the additional assertion that
$\nat{g}_{K}=\nat{f}_{K}$, except possibly at $\inf\PhiD( f_K)$ when
$\nat{g}_{K}$ is infinite there. The result, including this additional
assertion, is true for $K=1$. Assume the result and the addition are
true for $K - 1$. When~(\ref{phiiorder}) holds and~(\ref{newphiiorder})
holds for $i=1,2,\ldots,K - 1$, Lemma~\ref{niceg} implies that
$\nat{g}_{K - 1}$ is finite at $\psio_{K - 1}$ and so equals
$\nat{f}_{K - 1}$ and is continuous from the right there. Also, by the
induction hypothesis $\PhiD( \nat{g}_{K - 1}) \subset\PhiD( \nat {f}_{K
- 1})$ [and equals it unless $\nat{f}_{K - 1}$ is finite and
$\nat{g}_{K - 1}$ infinite at $\inf\PhiD( \nat{f}_{K - 1})=\inf\PhiD (
f_{K - 1})$]. Hence~(\ref{eitheror1}) and~(\ref{ii}) with $i=K - 1$
mean Lemma~\ref{fgagreelemma}\ref{fgagreelemmaiv} applies. Together
with the induction hypothesis this gives\vspace*{-1pt} $g_K={\mathfrak
M}[\nat{f}_{K - 1},\kappa_K]=f_K$ except possibly at $\psiu_{K - 1}$
and $\inf\PhiD( \nat {f}_{K - 1})$, where they can only differ with
$g_K$ being infinite. Furthermore, by Lemma~\ref{niceg},
$f_K(\phi_K)\leq g_K(\phi_K)<\infty$. Since both functions are proper
and convex, and $f_K$ is closed, they can only differ by $g_k$ being
greater, and infinite, at the endpoints of $\PhiD( f_K)$. Hence
$\nat{g}_K=\nat{f}_K$ except possibly at $\inf\PhiD( \nat{f}_K)$. Then
these two functions have the same F-dual, that is,
$g^{\acci}_K=f^{\acci}_K$.
\end{pf*}

\section{Formulas for the speed}\label{formulaforGamma}
The main objective here is to
establish Theorem~\ref{gformullem1} giving an alternative formula for
the speed $\Gamma(g^{\ast}_K)$, which plays a critical role
in the proof of Theorem~\ref{bestspeed}. A few other remarks are also
included about
computing the speed.

There are several alternative formulas for $\Gamma(f^{\ast})$
from the irreducible case that apply more widely
to any $k$-convex $f$. One is contained
in~(\ref{w-speed}) in Proposition~\ref{f-natural2}.
Another is that
$
\Gamma
=\sup\{a\dvtx f^{\ast}(a) \leq0\}
$,
which holds because $f^{\ast}$ is convex and increasing.
Furthermore, by convexity $\Gamma$ is the unique solution to $
f^{\ast}(\Gamma)=0,
$ provided only that there are a $u$ and $v$ with
$f^{\ast}(u)< 0 \leq f^{\ast}(v)< \infty$.

When $f$ is differentiable throughout $\PhiD( f)$ and
there is a $\theta$ such that $
\theta f'(\theta)-f(\theta)=0
$,
then $\Gamma(f^{\ast})=f'(\theta)$---this is straightforward
calculus when $\theta$ is in the interior of $\PhiD( \kappa
)$, and all
cases are covered by
\citet{MR0274683}, Theorem 23.5(b). Then $\Gamma(f^{\ast})$
can be
found by solving
$f(\theta)=\theta f'(\theta)
$ for $\theta$. This is certainly relevant in the irreducible
case,
since Lemma~\ref{analytic}\ref{analyticiii} gives that $f=\kappa$ is
differentiable, but need not be once there is more than one class.
%
%
\begin{lem}\label{gtot}
Suppose that $f$ and $\kappa$ are $k$-convex with $\Gamma(f^{\ast
})>-\infty
$, that $\chi=-\log I(\theta\in C)$ for a convex $C$,
that $g={\mathfrak M}[\nat{(\nat{f}+\chi)},\kappa]$ and that
this $g$ is
finite somewhere
[so $\PhiD( \nat{f}) \cap C \cap\PhiD( \kappa)
\neq\varnothing$].
Let $\psio=\sup C$.
For $0<\theta\notin C^+$, $g(\theta)=\infty$.
For $0<\theta\in C^+$,
%
%
\begin{equation}\label{form1}
\frac{g(\theta)}{\theta}
=\inf\biggl\{
\max\biggl\{\frac{f(\phi)}
{\phi},\frac{\kappa(\theta)}{\theta}\biggr\}
\dvtx
0<\phi\leq\theta, \phi\leq\psio\biggr\},
\end{equation}
where the condition $\phi\leq\psio$ can be omitted when (\ref
{eitheror})
holds and $\nat{f}$ is continuous from the right at $\psio$.
\end{lem}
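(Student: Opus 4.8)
The plan is to unpack the definition $g=\MAX{\nat{(\nat{f}+\chi)}}{\kappa}$ and compute $g(\theta)/\theta$ directly, using the characterisation of $\nat{(\cdot)}$ supplied by Lemma \ref{fg agree lemma}\ref{fg agree lemma i} together with the ``$\inf$ of ratios'' formula for $\Fd{(\cdot)}$ embodied in Proposition \ref{f-natural.2} and Lemma \ref{conc maj}. First I would dispose of the trivial direction: for $0<\theta\notin C^+$ we have $\chi(\theta)=\infty$ and, because $\nat{(\nat{f}+\chi)}$ inherits the property that its domain of finiteness is contained in $C^+$ (indeed $\Phid{\nat{(\nat f+\chi)}}=\Phidp{(\nat f+\chi)}\subset C^+$ by Lemma \ref{fnat simple}\ref{fnat simple(ii)}), we get $\nat{(\nat{f}+\chi)}(\theta)=\infty$, hence $g(\theta)=\infty$. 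So the content is the formula for $0<\theta\in C^+$.

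For that, write $h=\nat{(\nat{f}+\chi)}$. By Lemma \ref{fg agree lemma}\ref{fg agree lemma i}, for $\theta<\psio$ we have $h(\theta)=(\nat f+\chi)(\theta)$, and for $\theta\geq\psio$ we have $h(\theta)=\theta(\nat f(\psio)/\psio)$; moreover $h$ is \convex\ with $h(\theta)/\theta$ monotone decreasing (it is a ``$\natural$''-function). Consequently, for any fixed $\theta$,
\[
\frac{h(\theta)}{\theta}=\inf\Big\{\frac{\nat f(\phi)}{\phi}:0<\phi\leq\theta,\ \phi\leq\psio\Big\}
=\inf\Big\{\frac{f(\phi)}{\phi}:0<\phi\leq\theta,\ \phi\leq\psio\Big\},
\]
the first equality because $h(\theta)/\theta$ is the decreasing envelope of $(\nat f+\chi)(\phi)/\phi$ restricted to $C$, evaluated by pushing $\phi$ as far right as allowed, and the second because $\inf_{0<\phi\leq c}\nat f(\phi)/\phi=\inf_{0<\phi\leq c}f(\phi)/\phi$ — this is exactly the content of (\ref{w-speed}) in Proposition \ref{f-natural.2} applied on the truncated range, since $\nat f\leq f$ everywhere and $\nat f$ agrees with $f$ up to $\varthetapp f$ while being linear (hence ratio-constant at the infimal value) beyond. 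Then, since $g=\MAX{h}{\kappa}$,
\[
\frac{g(\theta)}{\theta}=\max\Big\{\frac{h(\theta)}{\theta},\frac{\kappa(\theta)}{\theta}\Big\}
=\inf\Big\{\max\Big\{\frac{f(\phi)}{\phi},\frac{\kappa(\theta)}{\theta}\Big\}:0<\phi\leq\theta,\ \phi\leq\psio\Big\},
\]
where the last step pulls the fixed quantity $\kappa(\theta)/\theta$ inside the infimum (the $\max$ of an infimum with a constant is the infimum of the $\max$es), giving (\ref{form1}).

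Finally, for the parenthetical claim that $\phi\leq\psio$ may be dropped when (\ref{either or}) holds and $\nat f$ is continuous from the right at $\psio$: in that case Lemma \ref{fg agree lemma}\ref{fg agree lemma ii} shows $\MAX{h}{\kappa}=\MAX{\nat f+\chi_1}{\kappa}$ with $\chi_1=-\log I(\theta\in C^+)$, i.e. the effective constraint on $\phi$ is only $\phi\in C^+$, and since $f(\phi)/\phi\geq \nat f(\psio)/\psio$ for $\phi>\psio$ anyway (monotonicity of $\nat f(\cdot)/(\cdot)$, plus $\nat f\leq f$), enlarging the range of $\phi$ past $\psio$ cannot decrease the infimum; conversely it obviously cannot increase it, so the constraint $\phi\leq\psio$ is redundant. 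I expect the main obstacle to be the careful bookkeeping at the endpoint $\psio$ — matching the value $h(\psio)=\nat f(\psio)$ against the linear continuation and against $\kappa(\psio)$, and handling the case $\psio=\infty$ uniformly — but Proposition \ref{f-natural.2} and Lemma \ref{fg agree lemma} together already package exactly what is needed, so this is routine rather than deep.
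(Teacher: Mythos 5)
Your argument for the main formula (\ref{form1}) follows essentially the paper's own route: express $g(\theta)/\theta$ as a maximum of ratios, use the decreasing-ratio property of the natural transform to turn $\nat{(\nat{f}+\chi)}(\theta)/\theta$ into an infimum over $\phi$, replace $\nat{f}$ by $f$ via Proposition \ref{f-natural.2}, and pull the constant $\kappa(\theta)/\theta$ through the infimum. One caution there: you derive the form of $h=\nat{(\nat{f}+\chi)}$ from Lemma \ref{fg agree lemma}\ref{fg agree lemma i}, whose hypotheses include right-continuity of $\nat{f}$ at $\psio$; Lemma \ref{gtot} assumes that only for the final parenthetical claim, not for (\ref{form1}). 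The paper sidesteps this by using nothing beyond the defining property that $\nat{F}(\theta)/\theta$ is decreasing (applied to $F=\nat{f}+\chi$). Under the standing hypothesis $\Phid{\nat{f}}\cap C\cap\Phid{\kappa}\neq\emptyset$ the possible failure of right-continuity is confined to the degenerate case $\psio=\inf \Phid{\nat{f}}$, so your step is repairable, but as written it leans on a lemma whose hypotheses you have not checked.

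The genuine gap is in your last step, the redundancy of the constraint $\phi\leq\psio$. You assert that $f(\phi)/\phi\geq \nat{f}(\psio)/\psio$ for $\phi>\psio$ ``by monotonicity of $\nat{f}(\cdot)/(\cdot)$ plus $\nat{f}\leq f$''. But $\nat{f}(\theta)/\theta$ is monotone \emph{decreasing}, so for $\phi>\psio$ it yields $\nat{f}(\phi)/\phi\leq\nat{f}(\psio)/\psio$, the reverse of what your chain needs; and the asserted inequality is simply false whenever $\varthetapp{f}>\psio$ (for instance $f=\kappa_1$ Gaussian with $\psio<2b_1\delta_1$, where $f(\phi)/\phi$ is strictly decreasing past $\psio$). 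That is precisely the situation in which (\ref{either or}) holds only through its second alternative, and there the extra candidates $\phi\in(\psio,\theta]$ have to be controlled through the $\kappa$ term, not through $f$: for $\theta\geq\psio$ one has $\max\{f(\phi)/\phi,\kappa(\theta)/\theta\}\geq\kappa(\theta)/\theta\geq\nat{f}(\psio)/\psio\geq\nat{(\nat{f}+\chi)}(\theta)/\theta$, so the enlarged infimum is still at least $g(\theta)/\theta$. Alternatively, as the paper does, invoke Lemma \ref{fg agree lemma}\ref{fg agree lemma ii} to replace $\chi$ by $\chi_1$ and simply re-run your main computation with $C^+$ (whose supremum is infinite) in place of $C$. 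Either repair is short, but your written argument covers only the case $\varthetapp{f}\leq\psio$ and silently fails in the other case permitted by (\ref{either or}).
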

\begin{pf} It is immediate from its definition that $g(\theta
)=\infty$
for $0<\theta\notin C^+$.
By definition
$\nat{f}(\theta)/\theta$ is decreasing as $\theta$ increases
for any convex $f$.
For $\theta\in C^+$,
%
%
\begin{eqnarray}\label{withC}
\frac{g(\theta)}{\theta}
&=&
\max\biggl\{\frac{\nat{(\nat{f}+\chi)}(\theta)}{\theta},\frac
{\kappa
(\theta)}{\theta}\biggr\}
\nonumber\\
&=&
\inf\biggl\{
\max\biggl\{\frac{\nat{(\nat{f}+\chi)}(\phi)}
{\phi},\frac{\kappa(\theta)}{\theta}\biggr\}
\dvtx0 < \phi\leq\theta\biggr\}
\nonumber\\
&=&
\inf\biggl\{
\max\biggl\{\frac{\nat{f}(\phi)}
{\phi},\frac{\kappa(\theta)}{\theta}\biggr\}
\dvtx0<\phi\leq\theta, \phi\in C\biggr\}.
\end{eqnarray}
Proposition~\ref{f-natural2} relates $\nat{f}$ and $f$:
$\nat{f}(\theta)/\theta$ and
$f(\theta)/\theta$ agree and are decreasing up to
$\varthetapp{f}$;
when $\varthetapp{f}<\infty$,
the former is constant and the latter is larger
for $\theta>\varthetapp{f}$, and
either the two agree at $\theta=\varthetapp{f}$
or the latter is larger. Hence,
\[
\frac{g(\theta)}{\theta}=
\inf\biggl\{
\max\biggl\{\frac{f(\varphi)}
{\varphi},\frac{\kappa(\theta)}{\theta}\biggr\}
\dvtx0<\phi\leq\theta, \varphi\leq\phi\in C\biggr\}.
\]
This is~(\ref{form1}) when $\psio\in C$. When it is not, the limit of
$ f(\varphi)/ \varphi$ as $\varphi\uparrow\psio$ is no greater than
$f(\psio)/ \psio$ and so replacing $\varphi\leq\phi\in C$
by $\varphi\leq\psio$ in the formula
will not change the output.

Lemma~\ref{fgagreelemma}\ref{fgagreelemmaii} shows that
if~(\ref{eitheror}) holds
and $\nat{f}$ is continuous from the right at~$\psio$,
then the restriction to $\phi\in C$ in~(\ref{withC})
can be replaced by $\phi\in C^+$. Then
$f$ can replace $\nat{f}$
if this restriction is dropped, too; that is, for $\theta\in C^+$,
\begin{eqnarray*}
\frac{g(\theta)}{\theta}
&=&
\inf\biggl\{
\max\biggl\{\frac{\nat{f}(\phi)}
{\phi},\frac{\kappa(\theta)}{\theta}\biggr\}
\dvtx0<\phi\leq\theta, \phi\in C^+ \biggr\}
\\
&=&
\inf\biggl\{
\max\biggl\{\frac{f(\phi)}
{\phi},\frac{\kappa(\theta)}{\theta}\biggr\}
\dvtx0<\phi\leq\theta\biggr\}.
\end{eqnarray*}
\upqed\end{pf}
\begin{pf*}{Proof of Theorem~\ref{gformullem1}}
The result is true for $K=1$ as is the additional condition that
$\Gamma(g^{\ast}_1)>-\infty$.
Assume it is true along with this additional condition for $K - 1$.
Let $\btheta=(\theta_1, \theta_2,
\ldots, \theta_{K - 1})$, $h(\btheta)=\max\{\kappa
_i(\theta_i)
/ \theta_i\dvtx\break{i\leq K - 1}\}$ and let $\Delta_\phi$ be
the set the infimum is taken over in~(\ref{gform1}) for ``$K - 1$''
so that the induction hypothesis is
\[
\frac
{g_{K - 1}(\phi)}
{\phi}=\inf\{h(\btheta)\dvtx
\btheta\in\Delta_\phi
\}.
\]
By the previous lemma, for $0<\theta\in\PhiD^+_{K - 1,K}$,
\[
\frac
{g_K(\theta)}
{\theta}=\inf\biggl\{
\max\biggl\{\frac{g_{K - 1}(\phi)}
{\phi},\frac{\kappa_K(\theta)}{\theta}\biggr\}
\dvtx
0<\phi\leq\theta,
\phi
\leq\psio_{K - 1} \biggr\}.
\]
Now
\[
\max\biggl\{\frac{g_{K - 1}(\phi)}
{\phi},\frac{\kappa_K(\theta)}{\theta}\biggr\}
=\max\biggl\{\inf\{h(\btheta)
\dvtx\btheta\in\Delta_\phi
\},\frac{\kappa_K(\theta)}{\theta}\biggr\}
\]
and reordering the
maximum and infimum on the right makes no difference.
This gives $g_K$ in the required form and
Lemma~\ref{fgagreelemma}\ref{fgagreelemmanewi} gives that
$\Gamma(g^{\ast}_K)>-\infty$, completing the induction.
Then the formula for $\Gamma(g_K)$
is, by Proposition~\ref{f-natural2}, obtained by
minimizing also over $\theta$. The result for $f_K$ is just a special case.
\end{pf*}
%
%
\begin{lem}\label{gformullem2}
Assume~(\ref{newphiiorder}) holds.
In~(\ref{gform1}) and~(\ref{speed-g}) the conditions
``$\theta_{i} \leq\psio_{i}$''
can be dropped if
(\ref{eitheror1}) holds for $
i=1,2,\ldots,K - 1$. The conditions
``$\theta_{i} \in\PhiD^+_{i-1,i}$'' can be\vspace*{1pt} dropped\vadjust{\goodbreak} in~(\ref{gform1}) if
$\varthetapp{\kappa_{i+1}} \geq\psiu_{ i}$
for $i=1,\ldots,K - 2$
and from~(\ref{speed-g}) if this holds
also for $i=K - 1$.
When both sets of conditions in~(\ref{speed-g})
can be dropped,
$\Gamma(g^{\ast}_K)
=\Gamma(f^{\ast}_K)$.
\end{lem}
\begin{pf} Lemma~\ref{niceg} gives that $\nat{g}_i$ is continuous
at $\psio_i$.
Then the proof that the conditions
$\theta_{i} \leq\psio_{i}$
can be dropped in
(\ref{gform1}) is by induction
on $i$ using the last
part of Lemma~\ref{gtot}.
When $\varthetapp{\kappa_{i+1}} \geq\psiu_{ i}$ for $i=1, \ldots,
K
- 2$
the extra possibilities included by discarding the conditions
$\theta_{i} \in\PhiD^+_{i-1,i}$
for $i=2, \ldots, K - 1$
in~(\ref{gform1}) are larger
than those included and so make no difference to
the infimum. (Here
$\theta_{K} \in\PhiD^+_{K - 1,K}$ cannot
be excluded, since the infimum is not over $\theta_K$.) The argument
simplifying~(\ref{speed-g}) is the same.
\end{pf}
\begin{pf*}{Proof of Theorem~\ref{bestspeed}}
This is contained in
Lemma~\ref{gformullem2}.
\end{pf*}

\section{Simplifying the formula for the speed}\label{simplespeed}

%
\begin{lem}\label{conccut}
Assume $f$ and $\kappa$ are $k$-convex, that $\cl{f}(0)>0$ and that
$g={\mathfrak M}[\nat{f},\kappa]$
is finite somewhere. Let $\varthetapps{g}=\varthetapp{g}$ [and, for
later, $\speedps=\Gamma(g^{\ast})$]. Then the following hold:
{\renewcommand\thelonglist{(\roman{longlist})}
\renewcommand\labellonglist{\thelonglist}
\begin{longlist}
\item\label{conccut-gnat}
$\nat{g} \geq{\mathfrak M}[\nat{f},\nat{\kappa}]$;
\item\label{conccut-kappa}
$\varthetapp{\kappa} \leq\varthetapps{g}$;
\item\label{conccut-g}
$
g(\theta)=\nat{g}(\theta)={\mathfrak M}[\nat{f},\nat{\kappa
}](\theta) $ for $\theta< \varthetapps{g}$.
\end{longlist}}
\end{lem}
\begin{pf}
Let $
\varphi= \inf\{\theta\dvtx\kappa(\theta)> {\mathfrak M}[\nat
{f},\nat{\kappa}](\theta) \}$. Observe that
\[
{\mathfrak M}[\nat{f},\kappa]=g \geq\nat{g}=\nat{ {\mathfrak
M}[\nat{f},\kappa]} \geq
\nat{ {\mathfrak M}[\nat{f},\nat{\kappa}]}=
{\mathfrak M}[\nat{f},\nat{\kappa}],
\]
where the final equality is from Lemma~\ref{fnatsimple}\ref
{sweeplem}, which gives~\ref{conccut-gnat}. There is equality throughout
when $\theta< \varthetapp{\kappa}$,
since then $\nat{\kappa}(\theta)=\kappa(\theta)$, and also when
$\theta< \varphi$.
This implies that $\varthetapp{\kappa}\leq\varthetapps{g}$,
proving~\ref{conccut-kappa}, and that $\varphi\leq\varthetapps{g}$.
Note too, for later in the proof,
that $\varthetapp{\kappa} \leq\varphi$, because $\nat{\kappa}$ and
$\kappa$ agree for $\theta< \varthetapp{\kappa}$.
It remains to show that $\varthetapps{g}\leq\varphi$.
It is certainly true that $\varthetapps{g}\leq\varphi$ when $\varphi
=\infty$.
Also if $\kappa(\theta)=\infty$ for all $\theta>\varphi$, then
$
g(\theta)=
\infty$ for $\theta> \varphi$,
but, by Proposition~\ref{f-natural2}, $\nat{g}$ is finite for
$\theta
>\varphi$
and so $\varthetapps{g}\leq\varphi$.
In the remaining case $\varphi<\infty$,
$\kappa$ is finite on
$(\varphi,\varphi+\varepsilon)$ for some $\varepsilon>0$, and there are
$\theta_i \downarrow\varphi$ taken from this interval
with
$
g(\theta_i)=\kappa(\theta_i)$.
By Lemma~\ref{subdiffeasy}\ref{subdiffeasyi} $\subdiff\kappa
(\theta_i)$ is nonempty. Hence, by
Lemma~\ref{rateagreenew}, $g^{\ast}(a)=\kappa^{\ast}(a)$ for
$a \in\subdiff\kappa(\theta_i)$. Since
$\varthetapp{\kappa}\leq\varphi$,
Lemma~\ref{f-fltiiii}\ref{f-fltiiiii} implies that
$\kappa^{\ast}(a)>0$. Hence $g^{\ast}(a)>0$
and Lemma~\ref{f-fltiiii}\ref{f-fltiiiiii} gives
$\varthetapps{g}\leq\varphi$.
\end{pf}
%
%
\begin{lem}\label{conccut2}
Use the setup of Lemma~\ref{conccut}.
{\renewcommand\thelonglist{(\roman{longlist})}
\renewcommand\labellonglist{\thelonglist}
\begin{longlist}
\item\label{conccut-i}
If
$
\speedps= \max\{\Gamma(f^{\ast}),\Gamma(\kappa^{\ast}) \},
$ then $\nat{g}(\theta)={\mathfrak M}[\nat{f},\nat{\kappa}](\theta
)$ except
possibly at $\theta=\varthetapps{g}$.
\item\label{conccut-ii} If
$
\speedps> \max\{\Gamma(f^{\ast}),\Gamma(\kappa^{\ast}) \}$, then
$\varthetapps{g}< \infty$, and
$({\mathfrak M}[\nat{f},\nat{\kappa}](\theta)-\theta\speedps)$
is strictly positive when $\theta< \varthetapps{g}$ and strictly negative
when $\theta> \varthetapps{g}$.\vadjust{\goodbreak}
\end{longlist}}
\end{lem}
\begin{pf}
Lemma~\ref{conccut2}\ref{conccut-g}
gives $\nat{g}(\theta)=g(\theta)={\mathfrak M}[\nat{f},\nat
{\kappa}](\theta)$
for $\theta< \varthetapps{g}$.
Assume that $
\speedps= \max\{\Gamma(f^{\ast}),\Gamma(\kappa^{\ast}) \}
$ and that $\varthetapps{g}<\infty$.
Then
Proposition~\ref{f-natural2} implies that
$
\nat{g}(\theta)= \theta\speedps$
for $\theta> \varthetapps{g}
$.
Similarly,
$
\nat{\kappa}(\theta)= \theta\Gamma(\kappa^{\ast})$
for $\theta> \max\{0,\varthetapp{\kappa}\}$.
If $\speedps=\Gamma(\kappa^{\ast})$,
$\nat{g}$ and $\nat{\kappa}$ agree for $\theta>\varthetapps{g}$.
If instead, $\speedps=\Gamma(f^{\ast})>\Gamma(\kappa^{\ast})$,
then, for $\theta> \varthetapps{g}$,
$
\nat{f}(\theta) \geq
\theta\Gamma(f^{\ast})
=\nat{g}(\theta)$.
Hence, in both cases, using also Lemma~\ref{conccut}\ref
{conccut-gnat}, $\nat{g}(\theta)={\mathfrak M}[\nat{f},\nat{\kappa
}](\theta)$
for $\theta> \varthetapps{g}$.

Assume now that $
\speedps> \max\{\Gamma(f^{\ast}),\Gamma(\kappa^{\ast}) \}
$.
Take $a$ such that
\[
\max\{\Gamma(f^{\ast}),\Gamma(\kappa^{\ast})\}< a<
\speedps.
\]
Using Lemma~\ref{kh}(ii) and the definition of $\Gamma(\cdot)$,
${\mathfrak M}[\nat{f},\nat{\kappa}]^{\ast}(a)=
{\mathfrak C}[f^{\acci},\kappa^{\acci}](a)=\infty$ and
$g^{\ast}(a)<0$.
Hence $g$ and ${\mathfrak M}[\nat{f},\nat{\kappa}]$
differ somewhere and so Lem\-ma~\ref{conccut}\ref{conccut-g}
implies that $\varthetapps{g}<\infty$.

Since $g(\theta)\geq\speedps\theta$
for all $\theta$, ${\mathfrak M}[\nat{f},\nat{\kappa}](\theta
)=g(\theta
)\geq
\speedps\theta$
for $\theta< \varthetapps{g}$ and
$\theta\speedps=\nat{g}(\theta)\geq{\mathfrak M}[\nat{f},\nat
{\kappa}](\theta
)$ for
$\theta>\varthetapps{g}$. It remains to show these inequalities
are strict.
Since ${\mathfrak M}[\nat{f},\nat{\kappa}](\theta)/\theta$ is decreasing
it can
only equal
$\speedps$ on an interval
that, if nonempty, includes $\varthetapps{g}$.
If the interval has a nonempty interior,
then, by convexity of ${\mathfrak M}[\nat{f},\nat{\kappa}]$,
${\mathfrak M}[\nat{f},\nat{\kappa}](\theta) \geq\speedps\theta$
for all
$\theta$,
contradicting that
${\mathfrak M}[\nat{f},\nat{\kappa}](\theta)/\theta\rightarrow
\max\{\Gamma(f^{\ast}),\Gamma(\kappa^{\ast})\}<\speedps$
as $\theta\rightarrow\infty$.
\end{pf}
%
%
\begin{lem}
\label{closed}
In the setup of Lemma~\ref{conccut} assume also that $\nat{f}$ and
$\kappa$ are closed.
{\renewcommand\thelonglist{(\roman{longlist})}
\renewcommand\labellonglist{\thelonglist}
\begin{longlist}
\item\label{closed-i}
If
$
\speedps= \max\{\Gamma(f^{\ast}),\Gamma(\kappa^{\ast}) \},
$ then $\nat{g}={\mathfrak M}[\nat{f},\nat{\kappa}]$.
\item\label{closed-ii} If
$
\speedps> \max\{\Gamma(f^{\ast}),\Gamma(\kappa^{\ast}) \}$, then
$\nat
{g}(\theta)=\theta\speedps$ when $\theta\geq\varthetapps{g}$ and
$\nat
{g}(\theta)={\mathfrak M}[\nat{f},\nat{\kappa}](\theta)$ when
$\theta<
\varthetapps{g}$.
\end{longlist}}
\end{lem}
\begin{pf} When $\nat{f}$ and $\kappa$ are closed so are $\nat
{\kappa
}$, $g$, $\nat{g}$ and ${\mathfrak M}[\nat{f},\nat{\kappa}]$.
Part~\ref{closed-i} now follows from Lemma~\ref{conccut2}\ref{conccut-i} and
part~\ref{closed-ii} from Proposition~\ref{f-natural2} and Lem\-ma~\ref
{conccut}\ref{conccut-g}.
\end{pf}
%
%
\begin{lem}\label{conccut21}
In the setup of Lemma~\ref{conccut},
assume
$
\speedps> \max\{\Gamma(f^{\ast}), \Gamma(\kappa^{\ast}) \}
$. Then $g(\theta)=\kappa(\theta)>\nat{f}(\theta)$ on
$(\varthetapps
{g}, \infty)$.
{\renewcommand\thelonglist{(\roman{longlist})}
\renewcommand\labellonglist{\thelonglist}
\begin{longlist}
\item\label{conccut21i}If $\PhiD( \kappa)=\{\phi\}$, then
$\varthetapps{g}=\phi$,
$\kappa(\varthetapps{g})<\nat{f}(\varthetapps{g})
=g(\varthetapps{g})<\infty$ and $g$ is infinite elsewhere.
\item\label{conccut21ii}If $\PhiD( \kappa)$ is not a
single point, then,
for some $\varepsilon>0$,
$g(\theta)=\nat{f}(\theta)>\kappa(\theta)$ on
$(\varthetapps{g}-\varepsilon,\varthetapps{g})$.
\end{longlist}}
\end{lem}
\begin{pf} Using the definition of $g$ and Lemma~\ref{conccut2}\ref
{conccut-ii},
\[
{\mathfrak M}[\nat{f},\kappa]=g(\theta)>\nat{g}(\theta)=\Gamma
\theta>
{\mathfrak M}[\nat{f},\nat{\kappa}] \qquad\mbox{for }\theta\in
(\varthetapps{g},\infty).
\]
Thus $g$ agrees with $\kappa$ and strictly exceeds $\nat{f}$
on $(\varthetapps{g},\infty)$.

If $\varthetapps{g} =\inf\PhiD( \kappa) < \sup\PhiD
( \kappa)$,
then the closures of $g$ and $\kappa$
agree everywhere, giving $\speedps=\Gamma(\kappa^{\ast})$, which has
been ruled out.
Hence either $\PhiD( \kappa)=\{\varthetapps{g}\}$ and
$\kappa(\varthetapps{g})<\nat{f}(\varthetapps{g})$, giving \ref
{conccut21i},
or $\inf\PhiD( \kappa)<\varthetapps{g}\leq\sup\PhiD
( \kappa)$.
Assume the latter, so that there is an $\varepsilon>0$ such that
$\kappa$ is finite, and continuous,
on $(\varthetapps{g}-\varepsilon,\varthetapps{g})$ and so
$\nat{\kappa}$ is finite and continuous on $(\varthetapps
{g}-\varepsilon
,\infty)$. When
$\nat{f}$
is infinite on $(-\infty,\varthetapps{g})$ the result holds.
Hence by adjusting $\varepsilon$, we can now assume $\nat{f}$
is also finite on $(\varthetapps{g}-\varepsilon,\infty)$.
Say $\varthetapp{\kappa}=\varthetapps{g}$. Using continuity on
$(\varthetapps{g}-\varepsilon,\infty)$, Proposition~\ref{f-natural2} and
Lemma~\ref{conccut}\ref{conccut-g},
\[
\speedps\varthetapps{g} = \nat{g}(\varthetapps{g})= \max\{\nat
{f}(\varthetapps{g}), \nat{\kappa}(\varthetapps{g})\}
>\Gamma(\kappa^{\ast})\varthetapps{g}=\nat{\kappa}(\varthetapps{g}).
\]
A further use of continuity now gives $\nat{f}(\theta)>\nat{\kappa
}(\theta)=\kappa(\theta)$ on $(\varthetapps{g}-\varepsilon
,\varthetapps{g})$
after, if necessary,
taking $\varepsilon$ smaller. This proves~\ref{conccut21ii} in this case.

Say now that $\varthetapp{\kappa}<\varthetapps{g}$, which by Lemma
\ref{conccut}\ref{conccut-kappa}
is the only other possibility, and adjust $\varepsilon$ so that
$\varthetapp{\kappa}\leq\varthetapps{g}-\varepsilon$.
Suppose, for a contradiction, that there is a
$\psi\in(\varthetapps{g}-\varepsilon,\varthetapps{g})$
with $\kappa(\psi)=g(\psi)$.
Take $a \in\subdiff\kappa(\psi)$, which is nonempty.
By Lem\-ma~\ref{f-fltiiii}\ref{f-fltiiiii}, $\kappa^{\ast}(a)>0$
because $\psi>\varthetapp{\kappa}$, but $g \geq\kappa$ and so Lemma
\ref{rateagreenew}
gives $\kappa^{\ast}(a)=g^{\ast}(a)$. However, by Lemma \ref
{f-fltiiii}\ref{f-fltiiiiii},
$\psi<\varthetapps{g}$ implies $g^{\ast}(a)\leq0$.
Hence there is no such $\psi$ and so $g=\nat{f}>\kappa$
on $(\varthetapps{g}-\varepsilon,
\varthetapps{g})$.
\end{pf}
%
%
\begin{lem}\label{indulem} In the setup and conditions of Proposition
\ref{alternativerec}, suppose that $\kappa_1(0)>0$ and that
$\Gamma(f^{\ast}_K)>\max\{\Gamma(f^{\ast}_{K - 1}),
\Gamma(\kappa^{\ast}_K)\}$.
Then
\[
f_K=
{\mathfrak M}\Bigl[\max_{j} \nat{\kappa}_j,\kappa_K \Bigr].
\]
\end{lem}
\begin{pf}
For $i=1,2,\ldots, K$, let
\[
h_{i}=
{\mathfrak M}\Bigl[\max_{j \geq i} \nat{\kappa}_{j},\kappa_K \Bigr]
\]
so that $h_{K}=\kappa_{K}$.
Now suppose that
%
%
\begin{equation}\label{industart}
f_K={\mathfrak M}[\nat{f}_{i},h_{i+1}],
\end{equation}
which is true, by definition, for $i=K - 1$.
Induction will be used to show that this holds also for $i=1$, which is
the required result because $\nat{f}_{1}=\nat{\kappa}_{1}$.

Assume~(\ref{industart}) holds for $i$ and
consider $\nat{f}_{i}=\nat{{\mathfrak M}[\nat{f}_{i-1},\kappa
_{i}]}$. Using
Lemmas~\ref{analytic},~\ref{alternativerectwo} and~\ref{closed},
there are two possibilities.
One is that $\nat{f}_{i}={\mathfrak M}[\nat{f}_{i-1},\nat{\kappa}_{i}]$
everywhere, in which case,
%
%
\begin{equation}\label{indu}
f_K=\max
\{
\nat{f}_{i-1},\nat{\kappa}_{i}, h_{i+1}
\}
={\mathfrak M}[\nat{f}_{i-1},h_{i}],
\end{equation}
giving~(\ref{industart}) for $i-1$.
Otherwise, $\varthetapp{f_i}<\infty$ and
\[
\nat{f}_{i}(\theta)=\cases{
\theta\Gamma(f^{\ast}_i), &\quad for $\theta\geq\varthetapp
{f_i}$,\cr
{\mathfrak M}[\nat{f}_{i-1},\nat{\kappa}_{i}](\theta),
&\quad for $\theta< \varthetapp{f_i}$.}
\]
Thus~(\ref{indu}) holds for $\theta< \varthetapp{f_i}$.
Also, $\Gamma(f^{\ast}_i)\leq\Gamma(f^{\ast}_{K - 1})<\Gamma
(f^{\ast}_K)$, which implies that $f^{\ast}_{K}(\Gamma(f^{\ast
}_{i}))<0$. Hence,
for all $\theta$,
$
\theta\Gamma(f^{\ast}_i)<f_K(\theta)
$
and so, in particular, when $\theta\geq\varthetapp{f_i}$
\[
f_K(\theta)={\mathfrak M}[\nat{f}_{i},h_{i+1}](\theta)
=
\max\{\theta\Gamma(f^{\ast}_i),h_{i+1}(\theta) \}
=
h_{i+1}(\theta).\vadjust{\goodbreak}
\]
Thus, using this and Lemma~\ref{fnatsimple}\ref{fnatsimpleiii},
\[
h_{i+1}(\theta)> \theta\Gamma(f^{\ast}_i)=\nat{f}_{i}(\theta)
\geq{\mathfrak M}[\nat{f}_{i-1},\nat{\kappa}_{i}](\theta).
\]
Hence,~(\ref{indu}) also holds when $\theta\geq\varthetapp{f_i}$.
This shows that~(\ref{indu})
always holds when~(\ref{industart}) holds, which completes the
inductive step.
\end{pf}
%
%
\begin{lem}\label{pairwiselem} In a sequential process satisfying
$\kappa_1(0)>0$ and
(\ref{phiiorder}), let $r_K$ be
given by the recursion~(\ref{firstrecur}) described in Theorem \ref
{prelimmaintheorem}. Then
\[
\Gamma(r_K)= \max_{i \bef j} \{ \Gamma({\mathfrak
C}[\kappa^{\acci}_i,\kappa^{\ast}_j])\} = \max_{i \bef j} \{
\Gamma({\mathfrak M}[\nat{\kappa}_i,\kappa_j]^{\ast}) \}.
\]
\end{lem}
\begin{pf}
Note first that for a sequential process $i
\bef j$ is the same as $i < j$.
Take $f_i$ as in Proposition~\ref{alternativerec}, so that
$r_i=f^{\acci}_i=(\nat{f}_i)^{\ast}$.
Let $\speedps=\Gamma(r_K)$ $(\mbox{$=$}\Gamma(f^{\ast}_K))$ and
$\varthetapps
{f_K}=\varthetapp{f_K}$. Since $\Gamma(\kappa^{\ast}_K) \leq
\Gamma({\mathfrak C}[\kappa^{\acci}_1,\kappa^{\ast}_K])$, it would be
enough to establish
the result for $\Gamma(r_{K - 1})$ in the case where $\speedps=
\max\{\Gamma(r_{K - 1}),\Gamma(\kappa^{\ast}_K)\} $.
Consequently, we can assume that
$\speedps>
\max\{\Gamma(r_{K - 1}),\Gamma(\kappa^{\ast}_K)\} $.
Now, Lem\-ma~\ref{conccut2} gives
$\varthetapps{f_K}<\infty$, and $f^{\ast}_K(\speedps)\leq0$
implies that
$\speedps\theta\leq f_K(\theta)$
everywhere.

Let
\[
h=\max\{
\nat{\kappa}_j\dvtx j \leq K-1\}.
\]
If $h$ is infinite on $(-\infty,\varthetapps{f_K})$, then there is a
$J<K$ with $\nat{\kappa}_J$ infinite on $(-\infty,\varthetapps{f_K})$.
If $\PhiD( \kappa_K)=\{\varthetapps{f_K}\}$,
then, by Lemma~\ref{conccut21}\ref{conccut21i}, there is a $J<K$
with $\nat{\kappa}_J(\varthetapps{f_K})>\kappa_K(\varthetapps{f_K})$.
In both these cases Lemma~\ref{conccut21} implies that $
f_K={\mathfrak M}[\nat{\kappa}_J,\kappa_K]$ and so $\speedps=\Gamma
({\mathfrak M}[\nat{\kappa}_J,\kappa_K]^{\ast})$.
Otherwise, using Lemma~\ref{conccut21}\ref{conccut21ii}, there is
an $\varepsilon>0$ such that $h$ and $\kappa_K$ are finite and continuous
on $(\varthetapps{f_K}-\varepsilon,\varthetapps{f_K})$.
Now, suppose that
$h(\varthetapps{f_K})>\kappa_K(\varthetapps{f_K})$, and take $J<K$ with
$\kappa_J(\varthetapps{f_K})=h(\varthetapps{f_K})$.
Using the continuity of $\kappa_K$ when finite,
there is an $\varepsilon>0$ such that $\kappa_K(\theta)<\kappa
_J(\theta)$
on $(\varthetapps{f_K}-\varepsilon,\varthetapps{f_K})$.
Also, Lemma~\ref{conccut21} implies that $\kappa_K$ is infinite on
$(\varthetapps{f_K},\infty)$.
Therefore, since
$\nat{\kappa}_J(\theta)/\theta$ is decreasing in $\theta$,
%
%
\begin{equation}
\label{justapair}
\speedps= \inf_{\theta>0} \frac{f_K(\theta)}{\theta} \geq
\inf_{\theta>0}\frac{{\mathfrak M}[\nat{\kappa}_J,\kappa_K](\theta
)}{\theta}
=\frac{\nat{\kappa}_J(\varthetapps{f_K})}{\varthetapps{f_K}}=
\frac{f_K(\varthetapps{f_K})}{\varthetapps{f_K}}=\speedps
\end{equation}
and so again $\speedps=\Gamma({\mathfrak M}[\nat{\kappa}_J,\kappa
_K]^{\ast})$.

This leaves the case where, for some $\varepsilon>0$, $f_K$
is finite on $(\varthetapps{f_K}-\varepsilon,\varthetapps{f_K}]$ and
$h(\varthetapps{f_K}) \leq\kappa_K(\varthetapps{f_K})$. Then\vspace*{1pt}
$\nat{\kappa}_j$ is continuous on
$(\varthetapps{f_K}-\varepsilon,\infty)$ for every $j$
and thus by Lemma~\ref{conccut21},
$f_K(\theta)=h(\theta)> \kappa_K(\theta)$ on $(\varthetapps
{f_K}-\varepsilon,\varthetapps{f_K})$ and $f_K(\theta)=\kappa_K(\theta
)>h(\theta)$ on $(\varthetapps{f_K},\infty)$.
By continuity and Lemma~\ref{closed}\ref{closed-ii}, $h(\varthetapps
{f_K})=\kappa_K(\varthetapps{f_K})=\speedps\varthetapps{f_K}$.
Let $\cal I$ be those $j < K$ with
$\nat{\kappa}_j(\varthetapps{f_K})=\speedps\varthetapps{f_K}$
and let $\tilde{h}=\max\{\nat{\kappa}_j\dvtx j \in{\cal I}\}$.
By reducing $\varepsilon$ if necessary, $f_K=\tilde{h}>\kappa_K$ on
$(\varthetapps{f_K}-\varepsilon,\varthetapps{f_K})$.
Let
$\gamma_j= \inf\subdiff\nat{\kappa}_j(\varthetapps{f_K})$
and
take $J$ to be an index
giving $\min\{\gamma_j\dvtx j \in{\cal I}\}$.
Take $\varepsilon'>0$. Then,
for some $\delta>0$, for $\theta\in(\varthetapps{f_K}-\delta
,\varthetapps{f_K})$ and
$j \in\cal I$,
\[
\nat{\kappa}_j(\theta) \leq\nat{\kappa}_j(\varthetapps{f_K})+(\gamma
_j-\varepsilon') (\theta-\varthetapps{f_K})\qquad
\bigl(\mbox{$=$}\speedps\varthetapps{f_K}+(\gamma_j-\varepsilon')
(\theta-\varthetapps{f_K})\bigr)\vadjust{\goodbreak}
\]
for otherwise, by convexity,
$(\gamma_j-\varepsilon') \in\subdiff\nat{\kappa}_j(\varthetapps{f_K})$.
Then, taking the max of these over
$j \in{\cal I}$ with $\delta$ as the minimum of those needed
gives
\[
f_K(\theta)=\tilde{h}(\theta)
\leq\speedps\varthetapps{f_K}+ (\gamma_J-\varepsilon')
(\theta-\varthetapps{f_K})
\]
for
$\theta\in(\varthetapps{f_K}-\delta,\varthetapps{f_K})$. But
$\speedps\theta\leq f_K(\theta)$
everywhere. Hence
\[
(\gamma_J-\varepsilon')
(\varthetapps{f_K}-\theta) \leq\speedps(\varthetapps{f_K}-\theta)
\theta\in(\varthetapps{f_K}-\delta,\varthetapps{f_K})
\]
and so $\gamma_J \leq\speedps$.
Therefore, for $\theta\leq\varthetapps{f_K}$,
\[
f_K(\theta) \geq\nat{\kappa}_J(\theta) \geq
\speedps\varthetapps{f_K}+ \gamma_J
(\theta-\varthetapps{f_K})\geq
\speedps\theta
\]
and for $\theta> \varthetapps{f_K}$, $f_K(\theta)=\kappa_K(\theta)$
and is strictly greater than both $\kappa_J(\theta)$ and~$\speedps
\theta$.
Thus~(\ref{justapair}) holds in this case, too, giving
$\speedps=\Gamma({\mathfrak M}[\nat{\kappa}_J,\kappa_K]^{\ast})$.
\end{pf}
\begin{pf*}{Proof of Theorem~\ref{pairwiseformula}}
Applying Lemma~\ref{pairwiselem} to every sequential process
gives the first formula for $\speedps$. Fix $i \bef j$. Let $f=\kappa
_i$, $\kappa=\kappa_j$ and $g={\mathfrak M}[\nat{f},\kappa]$ so that
$\Gamma({\mathfrak C}[\kappa_i^{\acci},\kappa_j^{\ast}])
=
\Gamma(g^{\ast})$. Now, an application of Lemma~\ref{gtot} (with
$C=[0,\infty)$)
and then of~(\ref{w-speed}) in Proposition~\ref{f-natural2}
gives the second formula.
\end{pf*}

\section{Expected numbers}\label{expnumbers}
%
%
\begin{theorem}
Consider a sequential process
with $K$ classes, $\conC{1}, \ldots, \conC{K}$, with
corresponding \PF eigenvalues $\kappa_1,\ldots,\kappa_K$ and in which
$\conC{1}$ is primitive. Suppose that
%
%
\begin{equation}\label{moments}
\bigcap_{j \leq K} \PhiD( \kappa_{j})
\neq\varnothing\quad\mbox{and}\quad
\bigcap_{j \leq i+1} \PhiD( \kappa_{j}) \subset\PhiD_{i, i+1}
\qquad\mbox{for } i=1,\ldots, K - 1.\hspace*{-35pt}
\end{equation}
Define $R_i$ recursively by $R_1= \kappa^{\ast}_1$ and
$R_{i}={\mathfrak C}[R_{i-1},\kappa^{\ast}_i]$ for $i=2,\ldots, K$.
Then
%
%
\begin{equation}\label{expectkeyresult}
\frac{1}{n} \log\bigl(\tE_\nu Z^{(n)}_{\sigma}[na,\infty) \bigr)
\rightarrow-R_K(a)
\end{equation}
except possibly at the upper endpoint of the interval on which $R_K$ is finite.
\end{theorem}
\begin{pf}Suppose that $m_{\upsilon\tau}> 0$ for $\upsilon\in\pen
$ and $\tau\in\last$.
Then
\[
\int e^{\theta z} \tE_\nu Z^{(n)}_\sigma(dz)= \sum_{r=0}^{n-1}
(m(\theta)^r)_{\nu\upsilon}
m(\theta)_{\upsilon\tau}
(m(\theta)^{n-r-1})_{ \tau\sigma}
\]
and so, by induction on the number of classes,
\[
\frac{1}{n} \log\int e^{\theta z} \tE_\nu Z^{(n)}_\sigma(dz)
\rightarrow\max_i \{ \kappa_i (\theta)\} \qquad\mbox{for } \theta>0.
\]
The second part of~(\ref{moments}) ensures the off-diagonal terms have
no effect; the first part ensures that the limit here is finite for
some $\theta>0$.
Induction on the number of classes shows that $R_K$ is the F-dual of
$\max_i \{ \kappa_i (\theta)\}$.
Now, as in Proposition~\ref{behaviour1}, large deviation theory gives
(\ref{expectkeyresult}).\vadjust{\goodbreak}
\end{pf}

Although $R_K$ is defined recursively it can be defined directly as the
convex minorant of $\kappa^{\ast}_1, \ldots, \kappa^{\ast}_K$.
It is easy to see, by induction, that $r_i \geq R_i$, so that $\Gamma(r_K)
\leq\Gamma(R_K)$.
To see that $R_i$ and $r_i$
really can be different, notice that the order of the classes matters
in $r_i$ but does not in $R_i$. It is easy to give a two-type reducible
example where
$\Gamma(r_K)<\Gamma(R_K)$. More specifically, arrange $\kappa
^{\ast}_1$
and $\kappa^{\ast}_2$ so that:
{\renewcommand\thelonglist{(\roman{longlist})}
\renewcommand\labellonglist{\thelonglist}
\begin{longlist}
\item$\kappa^{\ast}_1(\Gamma)=\kappa^{\ast}_2(\Gamma)=0$,
\item\label{last-iii} $\kappa^{\ast}_1(x)<\kappa^{\ast}_2(x)$ for
$x>\Gamma$,
\item their convex minorant is less than zero at $\Gamma$.

Then in computing
$\Gamma(r_K)$, these last two conditions do not matter, and \mbox{$\Gamma
(r_K)=\Gamma$}. However, they do matter in computing $\Gamma(R_K)$
which will be bigger than $\Gamma$. Note too that, if instead of type 1
preceding type 2 here, type~2 preceded type 1, then $\Gamma
(r_K)=\Gamma(R_K)$ and this would be an example of super-speed, as
described toward the end of the \hyperref[intro]{Introduction}
and in \citet{MR2744237}.\looseness=-1
\end{longlist}}
%
\section{Further lower bounds}\label{flb}
Consider a sequential process with $m_{\upsilon\tau}> 0$ for
$\upsilon\in\pen$ and $\tau\in\last$.
Once either~(\ref{eitheror1}) or
(\ref{ii}) fails for $i=K - 1$,
the behavior of
$\tE_{\upsilon} Z_\tau[x,\infty)$ starts to exert an influence:
the spatial spread of the children in the final class (of type $\tau$)
born to a parent in the penultimate class (of type $\upsilon$) matters.
It seems that
some regularity is needed beyond knowledge of the interval of
convergence of
$m_{\upsilon\tau}$ to derive a result similar to
Theorem~\ref{maintheorem} in this case.
The conditions~(\ref{nicenearpsio}) and~(\ref{nicenearpsiu})
in the next result
are on the tails
of the distribution of average numbers of type $\tau$ born to a
type $\upsilon$.\looseness=-1
%
%
\begin{theorem}\label{refinedmaintheorem}
Make the same assumptions as in Theorem~\ref{prelimmaintheorem};
define $g_i$ by the recursion~(\ref{thirdrecur}) in Theorem \ref
{newub} and assume~(\ref{newphiiorder}) holds.
Let $\upsilon\in\pen$ and $\tau\in\last$ be the types for which
$m_{\upsilon\tau}\neq0$ and
let
\begin{eqnarray*}
\psio&=&\sup\{\psi\dvtx m_{\upsilon\tau}(\psi)<\infty\}=\sup\PhiD
_{K -
1,K},\\
\psiu&=&\inf\{\psi\dvtx m_{\upsilon\tau}(\psi)<\infty\}=\inf\PhiD
_{K - 1,K}.
\end{eqnarray*}
Assume also that
\[
\lim\frac{1}{n} \log
\bigl(Z^{(n)}_{\upsilon}[na,\infty)
\bigr) = -g^{\acci}_{K - 1}(a) \qquad\mbox{a.s.-}\tP_{ \nu}.
\]
Finally, assume both of the following: if~(\ref{eitheror1})
fails for $i=K - 1$, then
%
%
\begin{equation}\label{nicenearpsio}
\lim_{x\rightarrow\infty}
\frac{\log\tE_{\upsilon} Z_\tau[x,\infty)}{x} = - \psio;
\end{equation}
if~(\ref{ii}) fails for $i=K - 1$, then
%
%
\begin{equation}\label{nicenearpsiu}
\lim_{x\rightarrow-\infty}
\frac{\log\tE_{\upsilon} Z_\tau[x,\infty)}{x} = - \psiu.
\end{equation}
Then
\[
\lim\frac{1}{n} \log
\bigl(Z^{(n)}_{\sigma}[na,\infty)
\bigr) = -g^{\acci}_{K}(a) \qquad\mbox{a.s.-}\tP_{ \nu}.\vadjust{\goodbreak}
\]
\end{theorem}

Note that Kawata [(\citeyear{MR0464353}), Theorem 7.7.4] shows that the
$\limsup$ of the sequences in~(\ref{nicenearpsio}) and
(\ref{nicenearpsiu}) must be $-\psio$ and $-\psiu$,
respectively. Thus, the substance of each condition is that the lim inf
equals the corresponding lim sup.
This theorem
improves on the lower bound in Theorem~\ref{prelimmaintheorem} in some
cases, and matches the upper bound already obtained. It is not too hard
to obtain with the machinery already established.
%
%
\begin{lem}\label{refinedmainlemma} In a sequential process,
let $\upsilon\in\pen$, $\tau\in\last$, $\psio$ and
$\psiu$ as in Theorem~\ref{refinedmaintheorem}
and suppose that for $\nu\in\first$, and $k$-convex $f$ with
$\Gamma(f^{\ast})>-\infty$,
\[
\lim\frac{1}{n} \log
\bigl(Z^{(n)}_{\upsilon}[na,\infty)
\bigr) = -f^{\acci}(a) \qquad\mbox{a.s.-}\tP_{ \nu}
\]
for $a \neq\Gamma(f^{\ast})$.
Let $\chi_1(\theta)=-\log I(\theta\in[\psiu, \infty))$
and $\chi_2(\theta)=-\log I(\theta\in(-\infty,\break \psio])$.
Then
\[
\liminf\frac{1}{n} \log
\bigl(F^{(n)}_{\tau}[na,\infty)
\bigr) \geq-g^{\acci}(a) \qquad\mbox{a.s.-}\tP_{ \nu}
\]
for all
$a < \Gamma(g^{\ast})$, where:
\begin{longlist}
\item
$g=\nat{f}$ or
\item$g=\nat{f}+\chi_2$ when~(\ref{nicenearpsio}) holds, or
\item$g=\nat{f}+\chi_1$ when~(\ref{nicenearpsiu}) holds, or
\item
$g=\nat{f}+\chi_1+\chi_2$ when both~(\ref{nicenearpsio}) and
(\ref
{nicenearpsiu}) hold.
\end{longlist}
\end{lem}
\begin{pf}
Case (i) is given by Proposition~\ref{inductthm-a1}.
Let $C= \PhiD_{K - 1,K}$. Case (iv) is considered;
the other two are similar.
Assume $\nat{f}(\theta)<\infty$ for
some $\theta< \psiu$ and that $\psio<\infty$;
otherwise this is equivalent to cases (ii)
or (iii).
Then
\[
g^{\ast}(a)=\sup_{\theta\in C}\{\theta a-\nat{f}(a)\}=\sup_{\theta
\in
C}\{\theta a-f^{\flat}(a)\}.
\]
Let
\[
\underline{\gamma} = \inf\{\gamma'\dvtx\gamma' \in\subdiff f^{\flat
}(\theta
), \theta\in C\}
\]
and let $\overline{\gamma}$ be the supremum over the same set: both
are finite.
Calculations like those in Lemma~\ref{rateagreenew} show that
\[
g^{\ast}(a)=
\cases{
\psiu a -f^{\flat}(\psiu),&\quad$a\in(-\infty,\underline{\gamma}]$,\vspace*{2pt}\cr
f^{\acci}(a),&\quad$a \in(\underline{\gamma} ,\overline{\gamma})$, \vspace*{1pt}\cr
\psio a -f^{\flat}(\psio), &\quad$a\in[\overline{\gamma} ,\infty)$.}
\]

The number to the right of $nc$ in generation $n$ exceeds
$N_n=Z^{(n-1)}_{\upsilon}[n a,\infty)$
independent copies of $Z_{\tau}[n(c-a),\infty)$
under $\tP_{ \upsilon}$.
Let the expectation of the latter be $\tilde{e}_n$.\vadjust{\goodbreak}
Here $a<c$, since $n(c-a)$ must go to infinity, but otherwise $a$ may
be chosen freely.
When $f^{\acci}(a)<0$, Lemma~\ref{bin-ld2} and~(\ref{nicenearpsio}) give
\begin{eqnarray*}
\liminf_n \frac{1}{n}\log\tE\bigl[Z^{(n)}_{\tau}[n c,\infty)| \cF{n-1}\bigr]
&\geq& \liminf_n \frac{1}{n}(\log N_n + \log\tilde{e}_n)
\\
&\geq& - \bigl(f^{\acci}(a) +\psio(c-a)\bigr)
\end{eqnarray*}
and so, maximizing over the available $a$,
\[
\liminf_n \frac{1}{n}\log\tE\bigl[Z^{(n)}_{\tau}[n c,\infty)| \cF{n-1}\bigr] \geq
\sup_{f^{\acci}(a)<0, a<c}\{\psio a -f^{\acci}(a)\} -\psio c.
\]
Since $f^{\acci}$ is closed, increasing and infinite when positive, $\{
f^{\acci}(a)<0,a<c\}$
may be replaced by $\{a\leq c\}$. Then
using Lemmas~\ref{subdiffeasy} and~\ref{rateagreenew}
\[
\liminf_n
\frac{1}{n}\log\tE\bigl[Z^{(n)}_{\tau}[n c,\infty)| \cF{n-1}\bigr] \geq
\cases{ f^{\flat}(\psio)-\psio c,&\quad for $c
\geq\overline{\gamma}$,\cr-f^{\acci}(c), &\quad for $c <
\overline{\gamma}$,}
\]
when this is strictly positive.
Similarly, but with
$a>c$, so that $n(c-a)$ goes to minus infinity,
\begin{eqnarray*}
\liminf_n
\frac{1}{n}\log\tE\bigl[Z^{(n)}_{\tau}[n c,\infty)| \cF{n-1}\bigr]
&\geq&
\liminf_n \frac{1}{n}(\log N_n + \log\tilde{e}_n)
\\
&\geq&
- \bigl(f^{\acci}(a) +\psiu(c-a)\bigr)
\end{eqnarray*}
provided the latter is strictly positive. Then,
maximizing over $a>c$,
\[
\liminf_n
\frac{1}{n}\log\tE\bigl[Z^{(n)}_{\sigma}[n c,\infty)| \cF{n-1}\bigr]
\geq
\cases{f^{\flat}(\psiu) - \psiu c,&\quad for
$c\leq\underline{\gamma}$,\vspace*{2pt}\cr
-f^{\acci}(c),&\quad for $c >\underline{\gamma}$,}
\]
again, provided the latter is strictly positive.

Combining these,
\[
\liminf_n
\frac{1}{n}\log\tE\bigl[Z^{(n)}_{\sigma}[n c,\infty)| \cF{n-1}\bigr]
\geq-
g^{\ast}(c),
\]
when this is strictly positive.
Then conditional Borel--Cantelli and continuity of $g^{\acci}$ complete
the proof.
\end{pf}
\begin{pf*}{Proof of Theorem~\ref{refinedmaintheorem}}
First apply Lemma~\ref{fgagreelemma} to
determine which of the four possibilities
in Lemma~\ref{refinedmainlemma} is relevant.
Now use Lemma~\ref{refinedmainlemma} to show
\[
\liminf\frac{1}{n} \log
\bigl(F^{(n)}_{\tau}[na,\infty)
\bigr) \geq-g^{\acci}_{K - 1}(a) \qquad\mbox{a.s.-}\tP_{ \nu},
\]
and then use Theorem~\ref{inductthm} to complete the proof.
\end{pf*}


%

\printaddresses

\end{document}